\begin{document}
\title[Infinite transition solutions]
{Infinite transition solutions for an Allen--Cahn equation}

\author[W.-L. Li]
{Wen-Long Li}

\address{Wen-Long Li \newline
 School of Sciences,
 Hangzhou Dianzi University, Hangzhou 310018, P. R. China}
\email{liwenlongchn@gmail.com}

%\dedicatory{Communicated by }
\subjclass[2020]{35J20, 35A01, 35A15}
\keywords{Moser--Bangert theory; constrained minimization method; infinite transition solutions}
\date{\today}

\begin{abstract}
  We give another proof of a theorem of Rabinowitz and Stredulinsky %in \cite{RS}
  obtaining infinite transition solutions for an Allen--Cahn equation.
  Rabinowitz and Stredulinsky %\cite{RS} 
  have constructed infinite transition solutions as locally minimal solutions, 
  but it is still an interesting question to establish these solutions by other method.
  Our result may attract the interest of constructing solutions
  with the shape of locally minimal solutions 
  of Rabinowitz and Stredulinsky for problems
  defined on descrete group.
\end{abstract}

\maketitle
\numberwithin{equation}{section}
\newtheorem{theorem}{Theorem}[section]
\newtheorem{lemma}[theorem]{Lemma}
\newtheorem{definition}[theorem]{Definition}
\newtheorem{proposition}[theorem]{Proposition}
\newtheorem{remark}[theorem]{Remark}

\newcommand{\R}{\mathbb{R}}
\newcommand{\Z}{\mathbb{Z}}
\newcommand{\N}{\mathbb{N}}
\newcommand{\Q}{\mathbb{Q}}
\newcommand{\ud}{\mathrm{d}}
\newcommand{\T}{\mathbb{T}}
\newcommand{\MMR}{\mathcal{R}}
\newcommand{\MY}{\mathcal{Y}}
\newcommand{\MM}{\mathcal{M}}
\newcommand{\MMM}{\mathscr{M}}

\allowdisplaybreaks

%%%%%%%%%%%%%%%%%%%%%%%%%%%%%%%%%%%%%%%%%%%%%%%%%%%%%%%
%%%%%%%%%%%%%%%%%%%%%%%%%%%%%%%%%%%%%%%%%%%%%%%%%%%%%%%
\section{Introduction}\label{sec:1}

In their elegant memoir \cite{RS},
Rabinowitz and Stredulinsky studied the following Allen--Cahn equation:
\begin{equation}\label{eq:PDE}
-\Delta u+F_{u}(x, u)=0, \quad x \in \mathbb{R}^{n}, \tag{PDE}
\end{equation}
where $F$ satisfies
\begin{equation}\label{eq:F1}
  \left\{
    \begin{array}{l}
      F\in C^2(\R^n\times\R,\R), \\
      \textrm{$F$ is $1$-periodic in all of its arguments.}
    \end{array}
  \right.
\end{equation}
\eqref{eq:PDE} belongs to a broad class of elliptic equations,
which were first initiated by Moser \cite{moser}.
The goal of Moser 
was to establish a codimension $1$ version of  
Aubry--Mather theory---an elegant theory for some $1$ dimensional variational 
problems (cf. \cite{Aubry, Mather, DynRep})---for elliptic PDEs.
To avoid introducing more concepts in the introduction,
we shall restrict ourselves to \eqref{eq:PDE}.
For \eqref{eq:PDE},
Moser studied its minimal and without self-intersection solutions.
These solutions exhibit similar properties of minimal solutions in Aubry--Mather theory.
For example,
every minimal and without self-intersection solution $u$ 
has a rotation vecter $\alpha=\alpha(u)\in\R^n$,
and for any $\alpha\in\R^n$ there exists a minimal and without self-intersection solution $u$ 
such that the rotation vector of $u$ is $\alpha$.
Let us denote all the minimal and without self-intersection solutions of \eqref{eq:PDE} by $\MMM$ and 
set $$\MMM_{\alpha}:=\{u\in\MMM \,|\, \textrm{The rotation vector of $u$ is $\alpha$}\}.$$
Bangert \cite{Bangert1, Bangert2, Bangert} studied Moser's problem.
He introduced the second invariants to clarify 
the structure of $\MMM_{\alpha}$.
For instance,
if $\alpha\in\Q^n$,
$\MMM_{\alpha}$ consists of periodic solutions and heteroclinic solutions.
Bangert showed his results by carefully studying the structures of 
the minimal recurrent sets in $\MMM_{\alpha}$.

For $\alpha\in\Q^n$,
Rabinowitz and Stredulinsky established the existence of
heteroclinic solutions in $\MMM_{\alpha}$ 
by minimization method 
(different with Bangert's method),
and then using excellent variational techniques 
they constructed a lot of complicated homoclinic and heteroclinic solutions of \eqref{eq:PDE}.
Part of their results can be generalized to the cases of $\MMM_{\alpha}$ of 
irrational rotation vector $\alpha$,
i.e., $\alpha\in\R^n\setminus \Q^n$.
Please see \cite{LW} for more details.
Although Rabinowitz and Stredulinsky \cite{RS} 
(please see \cite{Bo1, Bo2, Rabi2014} for further studies by Bolotin and Rabinowitz)
only studied \eqref{eq:PDE}
--- a special class of Moser's variational problem,
most of their methods and results also hold for more general Lagrangians 
(please see \cite[Remark 1.9]{Rabi2014}).

As the title of \cite{RS} indicates,
lots of solutions of Rabinowitz and Stredulinsky are locally minimal,
including infinite transition solutions considered in the present paper.
The so-called \emph{transition} means the solution changes its states in spatial direction.
To be more precise,
assume $v_0<w_0\in\MMM_0$ are periodic solutions,
and suppose there does not exist periodic minimal and without self-intersection solution lying between $v_0$ and $w_0$.
Heteroclinic solution mentioned above changes its states in $x_1$ from $v_0$ (resp. $w_0$) to $w_0$ (resp. $v_0$)
and can be seen as $1$ transition solution.
If a solution changes its states in $x_1$ from $v_0$ to $w_0$ and returns back to $v_0$,
it is called a $2$ transition solution.
Similarly, one can define multi- and infinite transition solutions.

Let us 
explain what is the meaning of local minimality property for functions in $W^{1,2}_{loc}(\R\times \T^{n-1})$.
\begin{definition}[{cf. \cite[Proposition 8.12]{RS}}]\label{def:locally_minimal}
For any $x=(x_1,\cdots, x_n)\in\R^n$ and small $r>0$, 
$U\in W^{1,2}_{loc}(\R\times \T^{n-1})$ is said to be locally minimal in $B_r(z)$
if 
$U$ minimizes
$$I_{r,z}(u):=\int_{B_r(z)} \left[ \frac{1}{2}|\nabla u|^2 + F(x,u) \right] \ud x$$
over
$$
E_{r,z}:=\{u\in W^{1,2}_{loc}(\R\times \T^{n-1}) \,|\, u=U \text{ on } (\R\times \T^{n-1}\setminus B_r(z))\}.
$$
\end{definition}
Observe that $U$ is a solution of \eqref{eq:PDE} in $B_r(z)$ if it is locally minimal in the same domain.
In their second method of constructing multitransition solutions \cite[Theorem 8.11]{RS},
Rabinowitz and Stredulinsky 
proved the constrained minimizers are solutions of \eqref{eq:PDE}
by showing these solutions satisfy local minimality property.
Since the multitransition solutions have uniform bounds 
in $C^{2,\alpha}_{loc}(\R\times \T^{n-1})$ for any $\alpha\in (0,1)$,
one obtain infinite transition solutions by an approximation argument.
In the key step, that is, multitransition solutions satisfy local minimality property,
one need $r>0$ sufficiently small 
such that the measure of $B_r(z)$ small enough yielding some important estimates. 
%(cf. \cite[(8.21)-(8.22)]{RS}).
%
Obviously this simple property heavily depends on the local structure of $\R^n$.
Thus it prevents us
to obtain Rabinowitz and Stredulinsky's locally minimal solutions
for other variational problems,
e.g., 
some problems defined on descrete group $\Z^n$ (please see e.g. \cite{dela, dela1, Mramor, Miao, LC1}).

Naturally one have a question:
can we obtain solutions with the shape of locally minimal solutions by other methods?
For infinite transition solutions, the answer is affirmative
and the present paper is devoted to this problem.
For other types of locally minimal solutions, 
the above question remains open.

The main results of this paper are new proofs of the existence of infinite transition solutions
(please see Theorems \ref{thm:6.81} and \ref{thm:6.82} below).
Our construction of infinite transition solutions also depends on multitransition solutions of \eqref{eq:PDE}.
But different from the proof of \cite[Theorem 8.32]{RS} 
which is relying on local minimality property,
%the second method of constructing multitransition solutions, 
we adopt the first method in \cite[Theorem 6.8]{RS}
with some modifications.
Roughly speaking,
we first establish the existence of $2K$ transition solutions 
and then extract a subsequence converging to an infinite transition solution.
This procedure is natural when one have obtained multitransition solutions.
The advantage of this method  
is we do not need local minimality property,
so it can be used for the cases defined on discrete group. 
However, as pointed out by Rabinowitz and Stredulinsky in \cite[p. 89]{RS},
there are two obstacles in the above process.
The first is to ensure the obtained infinite transition solution has 
the desired asymptotic property,
and the second is how to find a converging subsequence.
We overcome the first difficulty by some comparison results.
For the second, in an intuitive manner,
our method is to copy the ``$2$-transition'' infinite times,
which ensures the existence of a converging subsequence.

The rest of the paper is organized as follows:
Section \ref{sec:2} gives some preliminaries;
in Section \ref{sec:3},
we establish multitransition solutions
%prove \cite[Theorem 8.1]{RS} 
by modifying the proof of \cite[Theorem 6.8]{RS};
in Section \ref{sec:4},
we construct infinite transition solutions by multitransition solutions.

%%%%%%%%%%%%%%%%%%%%%%%%%%%%%%%%%%%%%%%%%%%%%%%%%%%%%%%%%%%%%%%%%%%%%
%%%%%%%%%%%%%%%%%%%%%%%%%%%%%%%%%%%%%%%%%%%%%%%%%%%%%%%%%%%%%%%%%%%%%
\section{Preliminary}\label{sec:2}

Let us begin with Moser's results (cf. \cite{moser, RS}).
We adopt the notations of \cite{RS}.
For $j\in \Z$ and $k\in\{1,2,\cdots, n\}$,
let $\tau_{j}^{k}u(x):=u(x-je_k)$,
where $e_k=(0,\cdots,1,\cdots,0)$,
i.e., the $k$-th coordinate is $1$ and the others $0$.
Set 
$$L(u):=\frac{1}{2}|\nabla u |^2 +F(x,u)$$
and $\T:=\R/\Z$.
Define 
$$\Gamma_0:=W^{1,2}_{loc}(\T^n):=\{u\in W^{1,2}_{loc}(\R^n)\,|\, 
\textrm{$u$ is $1$-periodic in $x_1,\cdots, x_n$}\}.
$$
For $u\in \Gamma_0$, let $$
J_0(u):=\int_{\T^n}L(u)\ud x.$$
Moser showed the following minimal problem 
can be achieved and the minimal points are classical solutions of \eqref{eq:PDE}:
\begin{equation}\label{eq:c0}
\inf_{u\in\Gamma_0}J_0(u).
\end{equation}
In other words, the set 
$$
\MM_0:=\{u\in\Gamma_0 \,|\, J_0(u)=c_0\}$$ 
is not empty 
and consists of solutions of \eqref{eq:PDE},
where $c_0:=\inf_{u\in\Gamma_0}J_0(u)$.
Note that $F$ is $1$-periodic in the last argument $u$,
thus if $u\in\MM_0$, 
then so is $u+k$ ($k\in\Z$). 
Moreover, $\MM_0$ is an ordered set. 
That is, for any $u, v\in\MM_0$,
one and only one of the following holds for all $x\in\R^n$:
$$u\equiv v, \quad  u<v, \quad \textrm{or} \quad u>v.$$ 
Define $\pi_0: C(\R^n)\to \R $ by $\pi_0(u):=u(0)$.
If $\pi_0(\MM_0)=\R$, 
$\MM_0$ is called a foliation of $\R^{n+1}$;
otherwise 
$\MM_0$ is said to be a lamination
and in this case one have the following: 
\begin{equation}\label{eq:*0}
  \textrm{there are adjacent $v_0, w_0\in \MM_0$ with $v_0<w_0$.} \tag{$*_0$}
  \end{equation}
The functions $v$ and $w$ satisfying $v<w$ in some set $\mathcal{B}$ are called adjacent if there does not exist $u\in\mathcal{B}$ such that 
$v\leq u\leq w$.
The lamination case appears frequently and is more complicated since
in this case
there are heteroclinic solutions.
To see this,
assume \eqref{eq:*0} holds.
For $i\in\Z$, set $T_i:= [i, i+1]\times \T^{n-1}$.
Define
\begin{equation*}
  \widehat{\Gamma}_{1}(v_0, w_0):=
  \left\{u \in W_{loc}^{1,2}\left(\mathbb{R} \times \mathbb{T}^{n-1}\right) 
  \mid v_0 \leq u \leq w_0\right\}, \\
\end{equation*}
and
\begin{equation*}
\begin{split}
\Gamma_{1}(v_0, w_0) :=&\{u \in \widehat{\Gamma}_{1}(v_0,w_0) \mid\|u-v_0\|_{L^{2}\left(T_{i}\right)} \rightarrow 0, i \rightarrow-\infty ;\\
&\quad \quad\quad \quad\quad \quad\quad \quad \|u-w_0\|_{L^{2}\left(T_{i}\right)} \rightarrow 0, i \rightarrow \infty\},\\
\Gamma_{1}(w_0, v_0) :=&\{u \in \widehat{\Gamma}_{1}(v_0,w_0) \mid\|u-w_0\|_{L^{2}\left(T_{i}\right)} \rightarrow 0, i \rightarrow-\infty ;\\
&\quad \quad \quad \quad\quad \quad\quad \quad\|u-v_0\|_{L^{2}\left(T_{i}\right)} \rightarrow 0, i \rightarrow \infty\}.
\end{split}
\end{equation*}
For $u\in \widehat{\Gamma}_{1}(v_0, w_0)$ and $p,q\in\Z$ with $p\leq q$,
let 
$$J_{1,p}(u):=\int_{T_p}L(u)\ud x -c_0 \quad \textrm{ and } \quad  J_{1; p,q}(u):= \sum_{i=p}^{q}J_{1,i}(u).$$
In \cite{RS}, it is showed that 
$J_{1; p,q}$ has a lower bound independent of $p,q$.
\begin{lemma}[{\cite[Proposition 2.8]{RS}}]\label{lem:2.8}
There is a constant $\bar{K}_1\geq 0$,
depending on $v_0$ and $w_0$ 
but independent of $p\leq q\in\Z$ and $u\in\widehat{\Gamma}_1(v_0,w_0)$,
such that
$J_{1;p,q}(u)\geq -\bar{K}_1$.
\end{lemma}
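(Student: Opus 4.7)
The natural strategy is cut-and-paste: compare $u$ with the periodic minimizer $v_0$. The only external input I would use is the standard Moser--Bangert fact that every $v\in\MMM_0$ is class-A minimal, i.e.\ for every bounded cylinder $\Omega=[a,b]\times\T^{n-1}$ and every $\bar u\in W^{1,2}_{loc}(\R\times\T^{n-1})$ equal to $v$ outside $\Omega$,
$$\int_{\Omega} L(\bar u)\,\ud x \;\geq\; \int_{\Omega} L(v)\,\ud x \;=\; (b-a)c_0.$$

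Given $u\in\widehat{\Gamma}_1(v_0,w_0)$ and $p\leq q$, the first step is to construct a competitor $\tilde u$ that equals $u$ on $[p,q+1]\times\T^{n-1}$ and equals $v_0$ outside a slightly enlarged slab. A natural choice is $\tilde u:=\varphi u+(1-\varphi)v_0$ for a smooth cutoff $\varphi\in[0,1]$ with $\varphi\equiv 1$ on $[p,q+1]$ and $\varphi\equiv 0$ outside $[p-1,q+2]$. The ordering $v_0\leq u\leq w_0$ together with $\varphi\in[0,1]$ forces $v_0\leq\tilde u\leq w_0$, and $\tilde u-v_0$ is compactly supported in $x_1$. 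Applying class-A minimality of $v_0$ on $\Omega=[p-1,q+2]\times\T^{n-1}$ with competitor $\tilde u$, and using $\tilde u=u$ on $[p,q+1]\times\T^{n-1}$, a rearrangement of the resulting inequality should give
$$J_{1;p,q}(u)\;\geq\;2c_0-\int_{T_{p-1}}L(\tilde u)\,\ud x-\int_{T_{q+1}}L(\tilde u)\,\ud x.$$

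The main obstacle is to bound the two transition integrals by a constant depending only on $v_0, w_0, F$. A direct expansion of $|\nabla\tilde u|^2$ picks up $\int_{T_{p-1}}|\nabla u|^2\,\ud x$, which is a priori unbounded on $\widehat{\Gamma}_1(v_0,w_0)$. To handle this I would allow the transition interval to \emph{slide}: for $j=1,\dots,N$, use a cutoff $\varphi_j$ whose transition occurs on $T_{p-j}$ (and symmetrically on $T_{q+j}$ on the right), run the comparison on the enlarged slab $[p-j,q+1+j]\times\T^{n-1}$, and either pigeonhole on $j$ to pick a transition interval with controlled Dirichlet energy, or observe that if every such interval has large gradient then $\int_{[p-N,p]\times\T^{n-1}}L(u)\,\ud x$ is already large, so the resulting inequality can be paired with the symmetric comparison anchored at $w_0$ (also class-A minimal) and with the adjacency hypothesis \eqref{eq:*0} to absorb the bad term. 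Carrying out this bookkeeping should yield $\bar K_1$ depending only on $\|v_0\|_{C^1}$, $\|w_0\|_{C^1}$, $\|w_0-v_0\|_{L^\infty}$ and $\|F\|_{L^\infty}$, and independent of $p\leq q$ and $u\in\widehat{\Gamma}_1(v_0,w_0)$. I expect the delicate point to be precisely this balancing step between sliding the cutoff and enlarging the comparison domain.
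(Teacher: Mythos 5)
Your overall strategy (cut-and-paste against the periodic minimizer $v_0$, invoking its class-A minimality) is the same one underlying the proof of \cite[Proposition 2.8]{RS}, which this paper simply cites; but as set up, your argument has a genuine gap precisely at the point you flag. You glue $u$ to $v_0$ \emph{outside} the window, on $T_{p-1}$ and $T_{q+1}$, so the transition cost involves $\|\nabla u\|_{L^2(T_{p-1})}$ and $\|\nabla u\|_{L^2(T_{q+1})}$, which are completely uncontrolled on $\widehat{\Gamma}_1(v_0,w_0)$. Your proposed repair --- sliding the cutoff to $T_{p-j}$, $j=1,\dots,N$, and pigeonholing --- cannot work: a function $u\in\widehat{\Gamma}_1(v_0,w_0)$ may have arbitrarily large Dirichlet energy on \emph{every} slab outside $[p,q+1]$ while being tame inside, so no good transition slab need exist; and the fallback observation that then $\int_{[p-N,p]\times\T^{n-1}}L(u)\,\ud x$ is large is irrelevant, because that energy does not enter the quantity $J_{1;p,q}(u)$ you must bound from below (it is fixed and lives only on $[p,q+1]\times\T^{n-1}$). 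The vague appeal to a symmetric comparison with $w_0$ and to \eqref{eq:*0} does not transfer information from outside the window into $J_{1;p,q}(u)$.

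The correct placement is to interpolate \emph{inside} the window, on $T_p$ and $T_q$: take $\phi=v_0$ for $x_1\le p$ and $x_1\ge q+1$, $\phi=u$ on $[p+1,q]\times\T^{n-1}$, with the linear interpolation \eqref{eq:dddd12dd} on $T_p$ and $T_q$. Class-A minimality of $v_0$ gives $J_{1;p,q}(\phi)\ge 0$, and since $\phi=u$ on the interior slabs, $J_{1;p,q}(u)\ge J_{1,p}(u)-J_{1,p}(\phi)+J_{1,q}(u)-J_{1,q}(\phi)$. The point of Lemma \ref{lem:fdf} is exactly that the uncontrolled term cancels here: part \eqref{lem:fdf1} gives $J_{1,p}(u)\ge \tfrac12\|\nabla(u-v_0)\|^2_{L^2(T_p)}-C_1$ while part \eqref{lem:fdf2} gives $J_{1,p}(\phi)\le \tfrac12\|\nabla(u-v_0)\|^2_{L^2(T_p)}+C_1$, so $J_{1,p}(u)-J_{1,p}(\phi)\ge -2C_1$ regardless of how large the gradient of $u$ on $T_p$ is (same at $T_q$; the case $q-p$ small is handled directly since each $J_{1,i}(u)\ge -C_1$). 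This yields $J_{1;p,q}(u)\ge -4C_1=:-\bar K_1$ with $\bar K_1$ depending only on $v_0,w_0,F$, which is the argument of \cite{RS}; without this interior cancellation your bookkeeping cannot close.
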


By Lemma \ref{lem:2.8},
one can define
$$
J_{1;-\infty, q}(u):= \liminf_{p\to -\infty}J_{1;p,q}(u),
\quad  \quad
J_{1; p, \infty}(u):= \liminf_{q\to \infty}J_{1;p,q}(u)$$
and
$$J_{1}(u):=\liminf_{p\to -\infty \atop q\to \infty}J_{1;p,q}(u).$$

A direct consequence of Lemma \ref{lem:2.8} is:
\begin{lemma}[{\cite[Lemma 2.22]{RS}}]\label{lem:2.22}
If $u\in\widehat{\Gamma}_1(v_0,w_0)$ and $p,q\in\Z$ with $p\leq q$,
\begin{equation*}
  J_{1;p,q}(u)\leq J_1(u)+2\bar{K}_1 \quad \textrm{and} \quad 
  J_{1;p,q}(u)\leq J_{1;-\infty, q}(u)+\bar{K}_1.
\end{equation*}
\end{lemma}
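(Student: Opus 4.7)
The plan is to exploit the additivity $J_{1;p',q'}(u) = J_{1;p',p-1}(u) + J_{1;p,q}(u) + J_{1;q+1,q'}(u)$ (which follows immediately from the definition of $J_{1;p,q}$ as a sum of $J_{1,i}$ over $i$ from $p$ to $q$) together with the uniform lower bound provided by Lemma~\ref{lem:2.8}. Both inequalities reduce to this one idea; no compactness, comparison, or minimization arguments are needed.

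For the second inequality, I would fix $u\in\widehat{\Gamma}_1(v_0,w_0)$ and $p\leq q$, and for any $p'\leq p-1$ write
\[
J_{1;p',q}(u) = J_{1;p',p-1}(u) + J_{1;p,q}(u).
\]
Lemma~\ref{lem:2.8} applied to the first summand gives $J_{1;p',p-1}(u)\geq -\bar{K}_1$, hence $J_{1;p',q}(u)\geq J_{1;p,q}(u)-\bar{K}_1$. Taking $\liminf_{p'\to-\infty}$ on the left yields $J_{1;-\infty,q}(u)\geq J_{1;p,q}(u)-\bar{K}_1$, which is the second claim.

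For the first inequality, I would iterate this trick on both ends: for $p'\leq p-1$ and $q'\geq q+1$,
\[
J_{1;p',q'}(u) = J_{1;p',p-1}(u) + J_{1;p,q}(u) + J_{1;q+1,q'}(u),
\]
and Lemma~\ref{lem:2.8} bounds the first and third summands below by $-\bar{K}_1$ each. Taking the double $\liminf$ as $p'\to-\infty$, $q'\to\infty$ delivers $J_1(u)\geq J_{1;p,q}(u)-2\bar{K}_1$, which is what we want.

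There is no serious obstacle here; the only things to be careful about are the degenerate edge cases (e.g.\ when $p$ or $q$ is at a boundary one has to allow the empty sum, or equivalently start the decomposition from $p'\leq p-1$ and $q'\geq q+1$ as above) and the handedness of the inequality when a $\liminf$ is taken on the larger side of an inequality — but since we are lower-bounding $J_{1;p',q'}(u)$ by a quantity independent of $p',q'$, the $\liminf$ preserves the inequality without any issue.
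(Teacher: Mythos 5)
Your proof is correct and is essentially the argument the paper intends: it states the lemma as ``a direct consequence of Lemma \ref{lem:2.8}'', and your decomposition of $J_{1;p',q'}(u)$ into the middle block plus tail blocks bounded below by $-\bar{K}_1$, followed by taking the (double) $\liminf$, is exactly that direct consequence. No gaps; the handling of the $\liminf$ direction and the edge cases is fine.
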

\begin{comment}
If $u$ belongs to $\Gamma_1(v_0,w_0)$, one have:
\begin{lemma}[{\cite[Proposition 2.24]{RS}}]\label{lem:2.24}
If $u\in\Gamma_1(v_0,w_0)$ and $J_1(u)<\infty$, then
\begin{equation*}
\begin{array}{c}
J_{1, i}(u) \rightarrow 0, \quad|i| \rightarrow \infty ,\\
\left\|\tau_{-i}^{1} u-v\right\|_{W^{1,2}\left(T_{0}\right)} \rightarrow 0, \quad i \rightarrow-\infty ,\\
\left\|\tau_{-i}^{1} u-w\right\|_{W^{1,2}\left(T_{0}\right)} \rightarrow 0, \quad i \rightarrow \infty ,\\
J_{1}(u)=\lim\limits _{p \to -\infty \atop q\to \infty} J_{1 ; p, q}(u).
\end{array}
\end{equation*}
\end{lemma}
\end{comment}
Set 
$$c_1(v_0,w_0):= \inf_{u\in \Gamma_1(v_0,w_0)}J_1(u) 
\quad \textrm{and} \quad
c_1(w_0,v_0):= \inf_{u\in \Gamma_1(w_0,v_0)}J_1(u). $$
By minimization method,
Rabinowitz and Stredulinsky showed the existence of heteroclinic solutions in $x_1$. %from $v_0$ to $w_0$:
\begin{lemma}[{\cite[Theorem 3.2]{RS}}]\label{lem:3.2}
If $F$ satisfies \eqref{eq:F1}
and \eqref{eq:*0} holds, then
\begin{enumerate}
  \item $\MM_1(v_0,w_0):=\{u\in\Gamma_1(v_0,w_0)\,|\, J_1(u)=c_1(v_0,w_0)\}\neq \emptyset$; %
  \item Any $U\in\MM_1(v_0,w_0)$ satisfies
\begin{enumerate}
  \item $U$ is a classical solution of \eqref{eq:PDE};
  \item $\left\|U-v_{0}\right\|_{C^{2}\left(T_{i}\right)} \rightarrow 0, i \rightarrow-\infty$, \\
$\left\|U-w_{0}\right\|_{C^{2}\left(T_{i}\right)} \rightarrow 0, i \rightarrow \infty$, \\
i.e., $U$ is heteroclinic in $x_1$ from $v_0$ to $w_0$;
  \item $v_0<U<\tau^{1}_{-1}U<w_0$; 
\end{enumerate}
  \item $\MM_1(v_0,w_0)$ is ordered.
\end{enumerate}
Similar conclusion holds for $\MM_1(w_0,v_0):=\{u\in\Gamma_1(w_0,v_0)\,|\, J_1(u)=c_1(w_0,v_0)\}$.
\end{lemma}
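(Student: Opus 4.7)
The plan is to apply the direct method of the calculus of variations to $J_1$ on $\Gamma_1(v_0,w_0)$, coupled with a translation normalization that prevents the minimizing sequence from drifting to infinity along $x_1$. Fix $\delta\in\bigl(0,\tfrac12\|w_0-v_0\|_{L^2(T_0)}\bigr)$ and take a minimizing sequence $\{u_k\}\subset\Gamma_1(v_0,w_0)$ with $J_1(u_k)\to c_1(v_0,w_0)$. Since $F$ is $1$-periodic in $x_1$, both $\widehat{\Gamma}_1(v_0,w_0)$ and $J_1$ are invariant under each $\tau^1_j$, $j\in\Z$. For each $k$, let $p_k$ be the smallest integer with $\|u_k-v_0\|_{L^2(T_{p_k})}\ge\delta$; this is well-defined because $u_k$ is asymptotic to $w_0$ at $+\infty$. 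Replacing $u_k$ by $\tau^1_{p_k}u_k$, I may assume $p_k=0$, so that $\|u_k-v_0\|_{L^2(T_i)}<\delta$ for all $i<0$.

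A diagonal subsequence then converges to some $U$ weakly in $W^{1,2}_{loc}(\R\times\T^{n-1})$, strongly in $L^2_{loc}$, and pointwise a.e. This uses $v_0\le u_k\le w_0$ together with Lemma \ref{lem:2.8}, which yields a uniform $W^{1,2}$ bound on every slab $T_i$. Weak lower semicontinuity on each $T_i$ and the uniform estimate $J_{1;p,q}\ge-\bar K_1$, which permits a Fatou-type interchange of $\liminf$ with the double limit $p\to-\infty,\,q\to+\infty$, deliver $v_0\le U\le w_0$ and $J_1(U)\le c_1(v_0,w_0)$; moreover strong $L^2$ convergence on $T_0$ preserves the normalization as $\|U-v_0\|_{L^2(T_0)}\ge\delta$.

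The main obstacle is to show $U\in\Gamma_1(v_0,w_0)$, i.e.\ that the correct asymptotics are inherited in the limit. From $J_1(U)<\infty$ and the uniform lower bound on partial sums one deduces $J_{1,i}(U)\to 0$ as $|i|\to\infty$. For any $i_\ell\to-\infty$, the translates $\tau^1_{-i_\ell}U$ are compact by the same argument and subsequentially converge to some $\tilde v\in\widehat{\Gamma}_1(v_0,w_0)$ with $J_{1,j}(\tilde v)=0$ for every $j\in\Z$; hence $\tilde v$ is $1$-periodic in $x_1$ and belongs to $\MM_0$. The inherited bound $\|\tilde v-v_0\|_{L^2(T_j)}\le\delta$, together with the ordered, adjacent structure of $v_0,w_0$ in $\MM_0$, forces $\tilde v=v_0$. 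To exclude the possibility that the $+\infty$ limit is $v_0$ rather than $w_0$, I exploit approximate minimality: choose $q_k>0$ as the smallest integer with $\|u_k-w_0\|_{L^2(T_{q_k})}\le\delta$, which exists by the $+\infty$ asymptotics of $u_k$, and show $\sup_k q_k<\infty$ via a cut-and-splice argument---if $q_k$ were unbounded, one could modify $u_k$ on a long interval where it is trapped strictly between $v_0$ and $w_0$, contradicting $J_1(u_k)\to c_1(v_0,w_0)$ via the absence of intermediate periodic minimizers guaranteed by adjacency. Passing to the limit then places the $+\infty$ asymptote of $U$ at $w_0$, so $U\in\Gamma_1(v_0,w_0)$ attains $c_1(v_0,w_0)$.

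The remaining properties are standard. Interior elliptic regularity applied to $U$, viewed as a local minimizer on every bounded cylinder, gives $U\in C^{2,\alpha}_{loc}$ and hence a classical solution of \eqref{eq:PDE}; Schauder estimates combined with the $L^2$ asymptotic decay lift the convergence $\|U-v_0\|_{L^2(T_i)}\to0$ (resp.\ to $w_0$) to the $C^2(T_i)$ statement in (2)(b). For (2)(c), $v_0<U<w_0$ follows from the strong maximum principle applied to $U-v_0$ and $w_0-U$ together with the nontriviality of the asymptotics, while $U<\tau^1_{-1}U$ follows by noting that $\min(U,\tau^1_{-1}U)$ and $\max(U,\tau^1_{-1}U)$ both lie in $\Gamma_1(v_0,w_0)$ with total $J_1$-cost equal to $2c_1(v_0,w_0)$, so both are minimizers and the strong maximum principle applied to their difference yields the strict ordering. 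Orderedness of $\MM_1(v_0,w_0)$ follows from the same $\min$/$\max$ argument applied to two arbitrary minimizers, and the $\MM_1(w_0,v_0)$ statement is obtained by swapping the roles of $v_0$ and $w_0$ in the normalization.
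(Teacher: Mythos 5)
The paper does not actually prove this lemma: it is imported verbatim from Rabinowitz--Stredulinsky (\cite{RS}, Theorem 3.2), so your proposal can only be compared with their argument, whose outline you do follow (direct minimization of the renormalized functional $J_1$ with a translation normalization, then min/max comparison for monotonicity and ordering). The difficulty is that the three technically decisive steps are asserted rather than proved, and two of them fail as stated. First, the ``Fatou-type interchange'': since $J_1$ is a liminf of the sums $J_{1;p,q}$ whose individual terms $J_{1,i}$ can be negative, weak lower semicontinuity on finite windows combined with Lemmas \ref{lem:2.8} and \ref{lem:2.22} only gives $J_{1;p,q}(U)\le\liminf_k J_{1;p,q}(u_k)\le c_1(v_0,w_0)+2\bar{K}_1$, hence $J_1(U)\le c_1(v_0,w_0)+2\bar{K}_1$, not $J_1(U)\le c_1(v_0,w_0)$. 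Removing the $2\bar{K}_1$ requires tail estimates uniform in $k$ (left tail controlled by your normalization, right tail by closeness of $u_k$ to $w_0$ at a slab $T_q$ with $q$ bounded uniformly in $k$), i.e.\ exactly the $\sup_k q_k<\infty$ claim you only address afterwards; as presented the step is circular and, in substance, is where RS invest most of their work. Second, the deduction ``$J_1(U)<\infty$ plus the uniform lower bound imply $J_{1,i}(U)\to0$'' is not available for $U\in\widehat{\Gamma}_1(v_0,w_0)$: the corresponding result in \cite{RS} (their Proposition 2.24) assumes $u\in\Gamma_1(v_0,w_0)$, i.e.\ precisely the asymptotics you are trying to derive; with only $J_{1;p,q}\ge-\bar{K}_1$, a finite liminf does not force individual slab energies to vanish (they can oscillate in sign with bounded partial sums), and likewise ``$J_{1,j}(\tilde v)=0$ for all $j$, hence $\tilde v$ is $1$-periodic and in $\MM_0$'' is not a one-line consequence.

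Third, the exclusion of the wrong asymptote at $+\infty$ rests on an incorrect mechanism: if $q_k\to\infty$, the functions $u_k$ need not be ``trapped strictly between $v_0$ and $w_0$'' on long intervals --- they may simply hug $v_0$ there --- and adjacency (absence of intermediate periodic minimizers) by itself produces no contradiction. The working argument is quantitative: one needs that any $u\in\widehat{\Gamma}_1(v_0,w_0)$ which is asymptotic to $v_0$ on the left, $\delta$-far from $v_0$ on $T_0$ (your normalization) and again $\delta$-close to $v_0$ at some later slab carries renormalized energy at least a fixed $\beta(\delta)>0$, so that splicing $v_0$ in at that slab drives $J_1(u_k)$ strictly below $c_1(v_0,w_0)$; this lower bound (an estimate of the type of Lemma 6.13 in \cite{RS}) is nontrivial and is nowhere supplied in your proposal. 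A lesser omission: $\Gamma_1(v_0,w_0)$ carries the obstacle constraint $v_0\le u\le w_0$, so a minimizer is a priori only a solution of a variational inequality; before invoking interior elliptic regularity one must argue (as RS do, using that $v_0$ and $w_0$ are solutions and the strict inequalities in (2)(c)) that the constraint is inactive. The min/max arguments for $v_0<U<\tau^{1}_{-1}U<w_0$ and for the orderedness of $\MM_1(v_0,w_0)$ are the standard ones and are fine modulo these points.
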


%%%%%%%%%%%%%%%%%%%%%%%%%%%%%%%%%%%%%%%%%%%%%%%%%%%%%%%%%%%%%%%%%%%%%%%%%%%%%%%%%%%

In this paper,
for any fixed $K\in \N$,
we first construct $2K$ transition solution, 
which is homoclinic to $v_0$ (please see \eqref{eq:6.6} below).
Then 
we use these $2K$ transition solutions to establish infinite transition solution.

Following Rabinowitz and Stredulinsky \cite{RS},
to construct multi- and infinite transition solution of \eqref{eq:PDE},
one need the following gap conditions:
  \begin{equation}\label{eq:*1}
    \begin{split}
       & \textrm{there are adjacent $v_1, w_1\in \MM_1(v_0,w_0)$ with $v_1<w_1$,}  \\
        \textrm{and }& \textrm{there are adjacent $\bar{v}_1, \bar{w}_1\in \MM_1(w_0,v_0)$ with $\bar{v}_1<\bar{w}_1$.}
    \end{split}\tag{$*_1$}
    \end{equation}
Assume \eqref{eq:*0} and \eqref{eq:*1}.
For $u\in \widehat{\Gamma}_1(v_0,w_0)$, define
\begin{equation*}%\label{eq:6.1}
\left\{\begin{array}{l}
\rho_{-}(u):=\left\|u-v_{0}\right\|_{L^{2}\left(T_{0}\right)} ,\\
\rho_{+}(u):=\left\|u-w_{0}\right\|_{L^{2}\left(T_{0}\right)}.
\end{array}\right.
\end{equation*}
Set $\bar{\rho}:=\left\|w_{0}-v_{0}\right\|_{L^{2}\left(T_{0}\right)}$.
Choose constants $\rho_{i} \in(0, \bar{\rho}), 1 \leq i \leq 4K$,
such that for $0\leq j\leq K-1$,
\begin{equation}\label{eq:6.2}
\left\{\begin{array}{ll}
\rho_{4j+1} \notin \rho_{-}\left(\mathcal{M}_{1}\left(v_{0}, w_{0}\right)\right), & \rho_{4j+2} \notin \rho_{+}\left(\mathcal{M}_{1}\left(v_{0}, w_{0}\right)\right), \\
\rho_{4j+3} \notin \rho_{+}\left(\mathcal{M}_{1}\left(w_{0}, v_{0}\right)\right), & \rho_{4j+4} \notin \rho_{-}\left(\mathcal{M}_{1}\left(w_{0}, v_{0}\right)\right).
\end{array}\right.
\end{equation}
Let $\ell=(\ell_1,\cdots, \ell_{4K})\in\N^{4K}$ and $m=(m_1,\cdots,m_{4K})\in\Z^{4K}$ with
\begin{equation}\label{eq:6.3}
\begin{split}
 &m_{4j+1}<m_{4j+2}<m_{4j+2}+ \ell_{4j+2} + \ell_{4j+3} \\
 <&m_{4j+3}<m_{4j+4}<m_{4j+4}+\ell_{4j+4}+\ell_{4j+5}<m_{4j+5},
 \end{split}
\end{equation}
and
\begin{equation}\label{eq:apart5}
\textrm{the quantities
appearing in \eqref{eq:6.3} are separated more than $3$.}
\end{equation}
Set
\begin{equation}\label{eq:6.4}
Y_{m, \ell} := Y_{m, \ell}\left(v_{0}, w_{0}\right) 
:=\left\{u \in \widehat{\Gamma}_{1}\left(v_{0}, w_{0}\right) \mid u \text { satisfies }\eqref{eq:6.5}-\eqref{eq:6.6}\right\},
\end{equation}
where
\begin{equation}\label{eq:6.5}
\begin{split}
(i)\,\,& \textrm{$\rho_{-}\left(\tau_{-i}^{1} u\right) \leq \rho_{4j+1}, \quad m_{4j+1}-\ell_{4j+1} \leq i \leq m_{4j+1}-1$},\\
(ii)\,\,& \textrm{$\rho_{+}\left(\tau_{-i}^{1} u\right) \leq \rho_{4j+2}, \quad m_{4j+2} \leq i \leq m_{4j+2}+\ell_{4j+2}-1$},\\
(iii)\,\,& \textrm{$\rho_{+}\left(\tau_{-i}^{1} u\right) \leq \rho_{4j+3}, \quad m_{4j+3}-\ell_{4j+3} \leq i \leq m_{4j+3}-1$},\\
(iv) \,\,& \textrm{$\rho_{-}\left(\tau_{-i}^{1} u\right) \leq \rho_{4j+4}, \quad m_{4j+4} \leq i \leq m_{4j+4}+\ell_{4j+4}-1$},
\end{split}
\end{equation}
with $0\leq j\leq K-1$,
and
\begin{equation}\label{eq:6.6}
\left\|\tau_{-i}^{1} u-v_{0}\right\|_{L^{2}\left(T_{0}\right)} \rightarrow 0, \quad|i| \rightarrow \infty.
\end{equation}
Define
\begin{equation}\label{eq:6.7}
b_{m, \ell} := b_{m, \ell}\left(v_{0}, w_{0}\right) := \inf _{u \in Y_{m, \ell}} J_{1}(u).
\end{equation}

Now the $2K$ transition solution can be obtained by the following theorem.
\begin{theorem}[{\cite[Theorem 8.1]{RS}}]\label{thm:6.8}
Let $F$ satisfy \eqref{eq:F1}.
Assume that \eqref{eq:*0} and \eqref{eq:*1} hold.
Then for sufficiently large $\ell_i \in\N$ ($1\leq i \leq 4K$),
there is a $U=U_{m,\ell}\in Y_{m,\ell}$ such that $J_1(U)=b_{m,\ell}$.
If in addition $m_{4j+2} -m_{4j+1}$ and $m_{4j+4}-m_{4j+3}$ are sufficiently large for all $j=0,\cdots, K-1$,
$U$ is a solution of \eqref{eq:PDE} and
\begin{equation}\label{eq:6.9}
\left\|U-v_{0}\right\|_{C^{2}\left(T_{i}\right)} \rightarrow 0 \quad \text { as }|i| \rightarrow \infty.
\end{equation}
\end{theorem}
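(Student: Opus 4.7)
The plan is the standard constrained minimization of Rabinowitz--Stredulinsky tailored to the $2K$-transition geometry encoded by \eqref{eq:6.5}--\eqref{eq:6.6}. First I would check $-\infty<b_{m,\ell}<\infty$: the lower bound follows from Lemma \ref{lem:2.8} (take $p\to-\infty$, $q\to\infty$ in $J_{1;p,q}\geq -\bar K_1$), and for the upper bound I construct an explicit comparison function by concatenating, in $x_1$, the periodic solutions $v_0$ and $w_0$ with suitably translated heteroclinics from $\MM_1(v_0,w_0)$ and $\MM_1(w_0,v_0)$ across each pair of indices $m_{4j+1},m_{4j+2}$ and $m_{4j+3},m_{4j+4}$ respectively; the $C^2$-decay from Lemma \ref{lem:3.2}(2)(b) together with the hypothesis that the $\ell_i$ are large guarantees the glued function lies in $Y_{m,\ell}$ and has finite $J_1$. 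A minimizing sequence $\{u_k\}\subset Y_{m,\ell}$ is then uniformly bounded in $W^{1,2}(T_i)$ for each $i$, because $v_0\leq u_k\leq w_0$ and Lemma \ref{lem:2.22} controls $J_{1;p,q}(u_k)$; a diagonal extraction furnishes a limit $U\in\widehat{\Gamma}_1(v_0,w_0)$ with $u_k\to U$ weakly in $W^{1,2}_{loc}$ and strongly in $L^2_{loc}$, and weak lower semicontinuity of the local functionals gives $J_1(U)\leq b_{m,\ell}$.

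The first subtle point is verifying $U\in Y_{m,\ell}$. The finitely many constraints in \eqref{eq:6.5} are preserved under $L^2_{loc}$-convergence, so they pass to $U$ immediately. The asymptotic condition \eqref{eq:6.6}, by contrast, can in principle fail in the limit, and here I would argue by an ``asymptotic comparison'' of the kind mentioned in the introduction: if $\|\tau_{-i}^{1}U-v_0\|_{L^2(T_0)}$ did not tend to $0$ as $|i|\to\infty$, then since the constraint indices $i$ in \eqref{eq:6.5} lie in a bounded range, one could replace $U$ by $v_0$ outside a sufficiently large window without violating \eqref{eq:6.5}; using Lemma \ref{lem:2.22} to estimate the discarded tails one verifies this replacement strictly decreases $J_1$, contradicting minimality.

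The main obstacle is to upgrade $U$ to a classical solution of \eqref{eq:PDE}, i.e., to show that all one-sided $L^2$ constraints in \eqref{eq:6.5} are strictly inactive; this is exactly where the largeness of $m_{4j+2}-m_{4j+1}$ and $m_{4j+4}-m_{4j+3}$ enters. Off the union of constraint windows $U$ is an unconstrained local minimizer on every ball, hence a solution by elliptic regularity. Inside a constraint window I argue by contradiction: suppose, for instance, that $\rho_-(\tau_{-i}^{1}U)=\rho_{4j+1}$ for some admissible $i$. A standard cutting-and-pasting argument of Moser--Bangert type, using the adjacent pair $v_1<w_1$ from \eqref{eq:*1} as truncation levels, produces a minimal heteroclinic $U^\ast\in\MM_1(v_0,w_0)$ with $\rho_-(U^\ast)=\rho_{4j+1}$, contradicting the choice \eqref{eq:6.2} of $\rho_{4j+1}\notin\rho_-(\MM_1(v_0,w_0))$; the three remaining constraints are handled symmetrically using $\bar v_1<\bar w_1$. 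Sufficient $x_1$-separation of the transitions is what guarantees that such a truncation can be localized inside a single transition interval without destroying admissibility in a neighbouring window or mixing constraints of opposite type, and the separation requirement in \eqref{eq:apart5} takes care of the interface bookkeeping. Once $U$ solves \eqref{eq:PDE} on all of $\R\times\T^{n-1}$, interior $C^{2,\alpha}$ Schauder estimates combined with the $L^2$-decay \eqref{eq:6.6} promote the decay to the $C^2$-decay \eqref{eq:6.9}, completing the proof.
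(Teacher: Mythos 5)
Your overall architecture (finite $b_{m,\ell}$ via gluing heteroclinics, minimizing sequence, $W^{1,2}_{loc}$ compactness, passing the finitely many constraints \eqref{eq:6.5} to the limit, regularity off the constraint regions, Schauder at the end) matches the paper's. But the decisive step --- showing the constraints are \emph{strictly} inactive so that $U$ solves \eqref{eq:PDE} inside the constraint regions --- is not established by your argument. You claim that an active constraint, say $\rho_-(\tau^1_{-i}U)=\rho_{4j+1}$, can be turned by a Moser--Bangert truncation (with $v_1<w_1$ as levels) into a \emph{minimal} heteroclinic $U^\ast\in\MM_1(v_0,w_0)$ with $\rho_-(U^\ast)=\rho_{4j+1}$, contradicting \eqref{eq:6.2}. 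Truncating and extending the multibump minimizer by $v_0$ and $w_0$ produces an element of $\Gamma_1(v_0,w_0)$ with the right $\rho_-$ value, but nothing forces it to be a global minimizer of $J_1$ on $\Gamma_1(v_0,w_0)$, so no contradiction with \eqref{eq:6.2} follows. The way \eqref{eq:6.2} actually enters is quantitatively, through Lemma \ref{lem:6.74}: $d_{4j}(v_0,w_0)>c_1(v_0,w_0)$. The paper's proof of (D) splits a cut-and-pasted $U^\ast$ into $\Theta,\Phi,\Psi$ with $J_1(\Phi)\geq d_{4j}(v_0,w_0)$, $J_1(\Theta)\geq b_{\widehat m,\widehat\ell}$, $J_1(\Psi)\geq b_{m',\ell'}$, giving the lower bound \eqref{eq:7.33}, and plays it against the upper bound \eqref{eq:7.35}, which is obtained by inserting a near-optimal heteroclinic $V_1\in\MM_1(v_0,w_0)$ into the gap between the $4j+1$ and $4j+2$ constraint blocks; it is exactly here that the largeness of $m_{4j+2}-m_{4j+1}$ (and $m_{4j+4}-m_{4j+3}$) is used, whereas in your sketch this hypothesis only appears as unquantified ``interface bookkeeping.'' Without the strict gap $d_{4j}>c_1$ and the two-sided energy comparison, the contradiction does not materialize.

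A second, smaller gap is in your treatment of the asymptotic condition \eqref{eq:6.6}. You assert that if the limit fails to decay to $v_0$ one may replace it by $v_0$ outside a large window and that Lemma \ref{lem:2.22} shows this ``strictly decreases $J_1$.'' Lemma \ref{lem:2.22} only bounds partial sums from above; it gives no strict gain. What is needed is a uniform positive lower bound on the energy of a transition from an $L^2$-neighborhood of $v_0$ to an $L^2$-neighborhood of $w_0$ (the $\beta(\bar\rho/2)$-type estimate of Rabinowitz--Stredulinsky), which must beat the gluing errors; the gluing in turn has to be performed at tori where the function is $W^{1,2}$-close to $v_0$, which is supplied by Lemma \ref{lem:6.27} together with the elliptic estimate of Lemma \ref{lem:1} and forces $\ell_1,\ell_{4K}$ large. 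Moreover, since at that stage one only knows $J_1(U)\leq b_{m,\ell}$ by lower semicontinuity and $U\in Y_{m,\ell}$ is not yet known, the comparison is carried out on the minimizing sequence $u_k$ (as in the paper's step (B)), not on $U$ itself; ``contradicting minimality of $U$'' is not yet available.
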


In \cite[Theorem 6.8]{RS}, $2$ transition solution is obtained
and
\cite[Theorem 8.1]{RS} is proved as \cite[Theorem 6.8]{RS}.
We also use the same method as in \cite{RS} to prove Theorem \ref{thm:6.8},
but we note some subtle estimates which ensure us obtaining infinite transition solutions.
We postpone the proof of 
Theorem \ref{thm:6.8} until Section \ref{sec:3}. 
The rest of this section will be devoted to other preliminaries.

For the minima $c_1(v_0,w_0)$ and $c_1(w_0,v_0)$, we have:
\begin{lemma}[{\cite[Lemma 6.11]{RS}}]\label{lem:6.11}
$c_1(v_0,w_0)+c_1(w_0,v_0)>0$.
\end{lemma}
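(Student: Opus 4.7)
The plan is to argue by contradiction: suppose $c_1(v_0, w_0) + c_1(w_0, v_0) \le 0$. By Lemma~\ref{lem:3.2}, fix classical heteroclinic minimizers $U_1 \in \MM_1(v_0, w_0)$ and $U_2 \in \MM_1(w_0, v_0)$, both satisfying $v_0 < U_i < w_0$ pointwise and the $C^2$-decay $\|U_1 - v_0\|_{C^2(T_i)} \to 0$ as $i \to -\infty$, $\|U_1 - w_0\|_{C^2(T_i)} \to 0$ as $i \to +\infty$ (symmetrically for $U_2$).

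The natural strategy is a ``triple transition'' construction. Using the two minimizers, glue five pieces into a competitor $W \in \Gamma_1(v_0, w_0)$: a translate of $U_1$ rising from $v_0$ to $w_0$, a $w_0$-plateau, a translate of $U_2$ descending from $w_0$ to $v_0$, a $v_0$-plateau, and a further translate of $U_1$ rising again from $v_0$ to $w_0$. The $C^2$-matching at the four joins costs only $o(1)$ as the plateau lengths diverge (using Lemma~\ref{lem:3.2}), and because $v_0, w_0 \in \MM_0$ make the plateau contributions to $J_1$ vanish, the action computes to
\begin{equation*}
J_1(W) = 2\,c_1(v_0, w_0) + c_1(w_0, v_0) + o(1) = c_1(v_0, w_0) + \bigl[c_1(v_0, w_0) + c_1(w_0, v_0)\bigr] + o(1).
\end{equation*}

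In the strict case $c_1(v_0, w_0) + c_1(w_0, v_0) < 0$, the bracket is negative and eventually dominates $o(1)$, yielding $J_1(W) < c_1(v_0, w_0)$ for sufficiently long plateaus, contradicting $c_1(v_0, w_0) = \inf_{\Gamma_1(v_0, w_0)} J_1$. The borderline case $c_1(v_0, w_0) + c_1(w_0, v_0) = 0$ is more delicate: $W$ is then only an approximate minimizer, and one must extract from a sequence $W_n$ of such triple-transition competitors (plateau lengths $\to \infty$, middle $v_0$-plateau pinned at $x_1 = 0$ by translation) a $C^2_{loc}$-subsequential limit $W^* \in \Gamma_1(v_0, w_0)$ with $J_1(W^*) = c_1(v_0, w_0)$. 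Lower semicontinuity of $J_1$ together with elliptic regularity for \eqref{eq:PDE} puts $W^* \in \MM_1(v_0, w_0)$, but the strict monotonicity $W^* < \tau^1_{-1} W^*$ from Lemma~\ref{lem:3.2}(2)(c) is incompatible with a surviving middle $v_0$-dip in $W^*$.

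The main obstacle is the borderline case: the triple-transition $W_n$ may exhibit a loss of compactness, with the pinned dip ``spreading out'' rather than surviving as a non-trivial feature of $W^*$. Overcoming this requires a careful normalization---for instance, pinning the $(v_0+w_0)/2$-crossing point (of the descending piece $U_2$) rather than the plateau itself---combined with a concentration-compactness argument so that the zigzag shape is retained in the limit and the contradiction with monotonicity of $\MM_1$-members is actually realized.
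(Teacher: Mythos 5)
Your construction handles the strict case $c_1(v_0,w_0)+c_1(w_0,v_0)<0$ correctly: the five-piece competitor lies in $\Gamma_1(v_0,w_0)$, the gluing cost is $o(1)$ by the $C^2$ decay in Lemma \ref{lem:3.2} together with Lemma \ref{lem:lemvvv}, the plateaus contribute nothing since $J_{1,p}(v_0)=J_{1,p}(w_0)=0$, and comparing with the definition of $c_1(v_0,w_0)$ gives the contradiction. (Returning to $\Gamma_1(v_0,w_0)$ via a third transition, rather than stopping after two and landing in the homoclinic class, is a nice way to avoid having to know that $J_1\geq 0$ there.) The genuine gap is the borderline case $c_1(v_0,w_0)+c_1(w_0,v_0)=0$, which is exactly the content of the lemma that your argument does not reach. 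The contradiction you aim for---a limit $W^*\in\MM_1(v_0,w_0)$ retaining a middle $v_0$-dip, violating $W^*<\tau^1_{-1}W^*$---cannot be forced by any normalization: $J_1$ is invariant under integer $x_1$-translations, and in the degenerate case the extra up--down pair costs exactly $0$ asymptotically, so the minimizing sequence $W_n$ loses compactness precisely by letting the added transitions drift apart. With your proposed pinning at the crossing of the descending piece, the $C^2_{loc}$ limit is simply (a translate of) $U_2\in\MM_1(w_0,v_0)$; with the plateau pinned it is $v_0$. Neither limit lies in $\Gamma_1(v_0,w_0)$, neither contradicts the ordering properties of $\MM_1$, and in any case $C^2_{loc}$ convergence preserves neither the asymptotic conditions defining $\Gamma_1(v_0,w_0)$ nor, without further estimates of the type used in the constrained minimization of Theorem \ref{thm:6.8}, the inequality $J_1(W^*)\leq\liminf_n J_1(W_n)$. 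Appealing to ``concentration compactness'' does not repair this, because in the equality case there really do exist minimizing sequences for $c_1(v_0,w_0)$ whose extra transitions escape to infinity; no local limit of them carries the zigzag.

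Note also that the present paper offers no proof of this lemma: it is quoted from \cite{RS}. The argument there (and the standard one in this setting) treats the degenerate case by a different mechanism than persistence of a multibump limit---one compares the two heteroclinics directly, e.g.\ through cut-and-paste or minimum/maximum competitors measured against the minimality of $v_0$ and $w_0$, and rules out the equality case by rigidity (ordering and the strong maximum principle for solutions of \eqref{eq:PDE}) rather than by compactness of approximate minimizers. To complete your proof you would need an ingredient of that kind; as written, the case $c_1(v_0,w_0)+c_1(w_0,v_0)=0$ remains open, and it is the whole point of the statement.
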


For $i\in\Z$, set $X_i:=\cup_{j=-2}^{2}T_{i+j}$.
\begin{lemma}[{\cite[Proposition 6.27]{RS}}]\label{lem:6.27}
Suppose \eqref{eq:*0} holds and $u\in\widehat{\Gamma}_1(v_0,w_0)$ with $J_1(u)\leq M<\infty$.
Then for any $\sigma>0$ and $t\in\Z$,
there is an $\ell_0=\ell_0(\sigma,M)\in\N$ independent of $u$ and $t$ such that whenever $\ell\in\N$ and $\ell\geq \ell_0$,
\begin{equation}\label{eq:6.28}
  \|u-\varphi\|_{L^{2}\left(X_{i}\right)} \leq \sigma
\end{equation}
for some
$i=i(\ell, t) \in(t-\ell+2, t+\ell-2)$ and $\varphi=\varphi_{\ell, t} \in\left\{v_{0}, w_{0}\right\}$.
\end{lemma}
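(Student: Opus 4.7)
The plan is a contradiction argument that classifies slabs inside the strip and then concludes by pigeonhole.

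Suppose the conclusion fails: there exist $\sigma_0>0$, sequences $t_k\in\Z$, $\ell_k\to\infty$, and $u_k\in\widehat{\Gamma}_1(v_0,w_0)$ with $J_1(u_k)\le M$, such that $\|u_k-\varphi\|_{L^2(X_i)}>\sigma_0$ for every $i\in(t_k-\ell_k+2,\,t_k+\ell_k-2)$ and $\varphi\in\{v_0,w_0\}$. Using the $1$-periodicity of $F$ in $x_1$, translate so that $t_k=0$. Classify each slab $T_i$ inside the strip as: type $A$ if $\|u_k-v_0\|_{L^2(T_i)}\le\sigma_0/3$; type $B$ if $\|u_k-w_0\|_{L^2(T_i)}\le\sigma_0/3$; type $C$ otherwise. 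Shrinking $\sigma_0$ so that $3\sigma_0<\|w_0-v_0\|_{L^2(T_0)}$ makes types $A$ and $B$ disjoint.

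The key auxiliary claim is that there exist $\delta=\delta(\sigma_0)>0$ and $m=m(\sigma_0)\in\N$ such that any $u\in\widehat{\Gamma}_1(v_0,w_0)$ whose slab $T_{i_0}$ is of type $C$ satisfies $J_{1;i_0-m,i_0+m}(u)\ge\delta$. This is itself proved by contradiction and compactness: a counterexample sequence $u_j$ with $J_{1;-m,m}(u_j)\to 0$ and $u_j$ bounded away in $L^2(T_0)$ from both $v_0,w_0$ yields, up to subsequence, a weak $W^{1,2}_{loc}$-limit $u^*$ with $\int_{\cup_{|r|\le m}T_r}L(u^*)\,dx\le(2m+1)c_0$. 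A Moser cut-and-paste on the outermost buffer slabs $T_{\pm m}$, followed by periodic extension in $x_1$, yields $V\in\Gamma_0$ with $v_0\le V\le w_0$, $V\notin\{v_0,w_0\}$, and $J_0(V)\le c_0+o_m(1)$; for $m$ large this forces $V\in\MM_0$ strictly between $v_0$ and $w_0$, contradicting the adjacency assumption \eqref{eq:*0}. Independently, by Lemma~\ref{lem:6.11}, the cost of a single $A$-to-$B$ transition (or vice versa) is bounded below by $\min\{c_1(v_0,w_0),c_1(w_0,v_0)\}>0$.

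Combined with Lemma~\ref{lem:2.22}, which gives $J_{1;p,q}(u_k)\le M+2\bar{K}_1$, these two lower bounds cap both the number of type-$C$ slabs and the number of $A/B$ alternations inside the strip $(-\ell_k+2,\ell_k-2)$ by a constant depending only on $M$, $\sigma_0$ and $\bar{K}_1$. For $\ell_k$ exceeding this constant, pigeonhole yields an $i_k\in(-\ell_k+2,\ell_k-2)$ such that $T_{i_k-2},\ldots,T_{i_k+2}$ are all of the same type, with common side $\varphi_k\in\{v_0,w_0\}$; then
$$\|u_k-\varphi_k\|_{L^2(X_{i_k})}^2=\sum_{r=-2}^{2}\|u_k-\varphi_k\|_{L^2(T_{i_k+r})}^2\le 5(\sigma_0/3)^2<\sigma_0^2,$$
contradicting the bad-window hypothesis. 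The main obstacle is the energy-gap claim for type-$C$ slabs: performing the cut-and-paste cleanly and drawing the contradiction with \eqref{eq:*0} is the delicate Moser--Bangert step, whereas the remaining combinatorial counting and pigeonhole are routine.
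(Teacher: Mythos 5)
The paper does not reprove this lemma (it is imported verbatim from [RS, Proposition~6.27]), so your argument has to stand on its own, and its central step does not. The weight of your proof rests on the auxiliary claim that a type-$C$ slab forces $J_{1;i_0-m,i_0+m}(u)\geq\delta>0$, and the argument you sketch for it breaks in two places. First, the deduction ``$J_0(V)\le c_0+o_m(1)$, so for $m$ large $V\in\MM_0$'' is a non sequitur: for any fixed $m$ a function with energy close to the minimum need not be a minimizer, and you cannot repair this by letting $m\to\infty$, because the cut-and-paste at the buffer slabs $T_{\pm m}$ costs a fixed amount (of the size of the constant $C_1$ in Lemma \ref{lem:fdf}), not $o_m(1)$: nothing is known about $u^*$ near $x_1=\pm m$ (it may be far from both $v_0$ and $w_0$ there), so the total excess of your periodized competitor is $O(1)$. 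A limit of such near-minimizers can perfectly well be (a translate of) a heteroclinic from Lemma \ref{lem:3.2}, which lies between $v_0$ and $w_0$, is far from both on a middle slab, and has all window energies bounded; it contradicts neither \eqref{eq:*0} nor the minimality of $c_0$. More structurally, since $J_{1;p,q}$ is only bounded below by $-\bar K_1$ (Lemma \ref{lem:2.8}) and free window ends can ``save'' renormalized energy, positivity of a raw window energy cannot be extracted unless one first locates slabs where $u$ is close to $v_0$ or $w_0$ in $W^{1,2}$ at which to glue cheaply --- which is essentially the statement of the lemma you are proving, so the natural repair is circular. (Also, your $V$ is $(2m+1)$-periodic in $x_1$, not $1$-periodic, so $V\in\Gamma_0$ is not literally correct; that part is fixable via the standard fact that the period-$k$ torus minimum equals $kc_0$.)

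The other two ingredients are also asserted rather than proved. Lemma \ref{lem:6.11} gives $c_1(v_0,w_0)+c_1(w_0,v_0)>0$ only; it does not give $\min\{c_1(v_0,w_0),c_1(w_0,v_0)\}>0$, and in any case an $A$-to-$B$ alternation observed inside the strip is not directly bounded below by $c_1$, which is defined through the full asymptotic classes $\Gamma_1$; comparing the two requires quantitative gluing/tail estimates of exactly the kind above. Finally, the ``routine'' pigeonhole needs more than the two lower bounds: the total budget $M+2\bar K_1$ is shared with the non-transition blocks, each of which is a priori only bounded below by $-\bar K_1$, and their number grows with the number of transitions, so dividing the budget by $\delta$ does not cap anything until you also prove a near-zero lower bound on long type-$A$/type-$B$ stretches (again a $W^{1,2}$-gluing argument). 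This is precisely why Rabinowitz and Stredulinsky run their lower-bound machinery through constrained classes with asymptotic or matching conditions (their $\beta$-type estimates) instead of raw window energies; as it stands, your proof has a genuine gap at its key claim.
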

In the proof of Theorem \ref{thm:6.8},
Lemma \ref{lem:6.27} is used in a different way from \cite{RS}.
%
%\begin{comment}
The next lemma provides the required asymptotic result.
\begin{lemma}[{\cite[Proposition 6.53 and Corollary 6.54]{RS}}]\label{lem:6.53}
Suppose \eqref{eq:*0} holds and $u\in\widehat{\Gamma}_1(v_0,w_0)$ with $J_1(u)\leq M<\infty$.
Suppose there is an $R>0$ such that $u$ is a solution of \eqref{eq:PDE} for $x_1\geq R$ (resp. $x_1\le -R$).
Then for some $\varphi\in\{v_0,w_0\}$,
$\|u-\varphi\|_{W^{1,2}\left(X_{i}\right)} \rightarrow 0$
as $i\to\infty$ (resp. $\|u-\varphi\|_{W^{1,2}\left(X_{i}\right)} \rightarrow 0$
as $i\to -\infty$).
Moreover, $\|u-\varphi\|_{C^{2}\left(T_{i}\right)} \rightarrow 0$
as $i\to\infty$ (resp. $\|u-\varphi\|_{C^{2}\left(T_{i}\right)} \rightarrow 0$
as $i\to -\infty$).
\end{lemma}
%
%We also need 
%the following comparison result.
%\end{comment}
Assume $\rho_i$ ($1\leq i\leq 4K$) satisfy \eqref{eq:6.2}.
For $1\leq j\leq K$,
define
\begin{equation*}
  \begin{split}
&\,\Lambda_{4j}\left(v_{0}, w_{0}\right)\\
:=&\left\{u \in \Gamma_{1}\left(v_{0}, w_{0}\right) \mid\left\|u-v_{0}\right\|_{L^{2}\left(T_{0}\right)}=\rho_{4j+1} \text { or }\left\|u-w_{0}\right\|_{L^{2}\left(T_{0}\right)}=\rho_{4j+2}\right\}
\end{split}
\end{equation*}
and
\begin{equation}\label{eq:6.73}
d_{4j}\left(v_{0}, w_{0}\right):=\inf _{u \in \Lambda_{4j}\left(v_{0}, w_{0}\right)} J_{1}(u).
\end{equation}
$\Lambda_{4j}\left(w_{0}, v_{0}\right)$ and $d_{4j}\left(w_{0}, v_{0}\right)$ are defined similarly.
%by replacing $\rho_{4j+1}$, $\rho_{4j+2}$ and $\Gamma_1(v_0,w_0)$ 
%by $\rho_{4j+4}$, $\rho_{4j+3}$ and $\Gamma_1(w_0,v_0)$.
We have:
\begin{lemma}[{\cite[Proposition 6.74]{RS}}]\label{lem:6.74}
$d_{4j}(v_0,w_0)>c_1(v_0,w_0)$ and $d_{4j}(w_0,v_0)>c_1(w_0,v_0)$.
\end{lemma}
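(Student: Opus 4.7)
The plan is to argue by contradiction, using the constraint at $T_0$ as an anchor that forces any limit of a minimizing sequence to lie in $\mathcal{M}_1(v_0,w_0)$, and then read off a contradiction with the choice of $\rho_i$ in \eqref{eq:6.2}. The inclusion $\Lambda_{4j}(v_0,w_0)\subset\Gamma_1(v_0,w_0)$ is built into the definition of $\Lambda_{4j}$, so $d_{4j}(v_0,w_0)\geq c_1(v_0,w_0)$ is immediate; I only need to rule out equality. The companion inequality for $(w_0,v_0)$ is entirely symmetric, so I focus on the first.

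Suppose $d_{4j}(v_0,w_0)=c_1(v_0,w_0)$ and pick $(u_k)\subset\Lambda_{4j}(v_0,w_0)$ with $J_1(u_k)\to c_1(v_0,w_0)$. After passing to a subsequence, all $u_k$ satisfy the same one of the two constraints in the definition of $\Lambda_{4j}$; say $\|u_k-v_0\|_{L^2(T_0)}=\rho_{4j+1}$ (the other case is identical, with $w_0$ and $\rho_{4j+2}$). Since $v_0\leq u_k\leq w_0$ and $J_{1;p,q}(u_k)\leq J_1(u_k)+2\bar{K}_1$ is uniformly bounded by Lemma \ref{lem:2.22}, the sequence is bounded in $W^{1,2}_{loc}(\mathbb{R}\times\mathbb{T}^{n-1})$, so I can extract $u_k\rightharpoonup U$ weakly in $W^{1,2}_{loc}$ and strongly in $L^2_{loc}$. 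Lower semicontinuity of each $J_{1,i}$ then gives $J_1(U)\leq\liminf_{k\to\infty}J_1(u_k)=c_1(v_0,w_0)$, and strong $L^2_{loc}$ convergence transfers the constraint $\|U-v_0\|_{L^2(T_0)}=\rho_{4j+1}$ and the pointwise order $v_0\leq U\leq w_0$.

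The main step, and the one I expect to be the most delicate, is to upgrade this limit to a genuine element of $\mathcal{M}_1(v_0,w_0)$. For this I need the heteroclinic asymptotics $\|U-v_0\|_{L^2(T_i)}\to 0$ as $i\to-\infty$ and $\|U-w_0\|_{L^2(T_i)}\to 0$ as $i\to+\infty$. The uniform bound on $J_1(u_k)$ lets me apply Lemma \ref{lem:6.27} with the \emph{same} $\ell_0=\ell_0(\sigma,M)$ to every $u_k$: in any window $(t-\ell_0+2,t+\ell_0-2)$, each $u_k$ is $L^2$-close on some $X_{i_k(t)}$ to either $v_0$ or $w_0$. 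Because $u_k\in\Gamma_1(v_0,w_0)$, at very negative $t$ the nearby minimal must be $v_0$ for every $k$, and at very positive $t$ it must be $w_0$; a diagonal extraction then passes this two-sided closeness through to $U$, so $U\in\Gamma_1(v_0,w_0)$. The inequality $J_1(U)\leq c_1(v_0,w_0)$ combined with $U\in\Gamma_1(v_0,w_0)$ (which forces $J_1(U)\geq c_1(v_0,w_0)$) yields $U\in\mathcal{M}_1(v_0,w_0)$.

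Finally, $U\in\mathcal{M}_1(v_0,w_0)$ together with $\rho_-(U)=\|U-v_0\|_{L^2(T_0)}=\rho_{4j+1}$ shows $\rho_{4j+1}\in\rho_-(\mathcal{M}_1(v_0,w_0))$, contradicting \eqref{eq:6.2}; in the other case one instead obtains $\rho_{4j+2}\in\rho_+(\mathcal{M}_1(v_0,w_0))$, again a contradiction. Hence $d_{4j}(v_0,w_0)>c_1(v_0,w_0)$, and the same reasoning with the roles of $v_0$ and $w_0$ swapped (and $\rho_{4j+3},\rho_{4j+4}$ in place of $\rho_{4j+1},\rho_{4j+2}$) gives $d_{4j}(w_0,v_0)>c_1(w_0,v_0)$. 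The principal obstacle throughout is the pinning-down of the correct asymptotic behavior of the weak limit $U$: without the anchoring provided by the constraint at $T_0$ together with Lemma \ref{lem:6.27}, the sequence could in principle develop its transition off to $\pm\infty$ and produce a degenerate limit (identically $v_0$ or $w_0$, a reversed heteroclinic, or a homoclinic), none of which would yield the contradiction.
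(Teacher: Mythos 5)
Your overall strategy---use the equality constraint at $T_0$ as an anchor, extract a $W^{1,2}_{loc}$-weak/$L^2_{loc}$-strong limit $U$ of a minimizing sequence, show $U\in\mathcal{M}_1(v_0,w_0)$, and contradict \eqref{eq:6.2}---is exactly the route of the proof the paper points to (the paper does not reprove this lemma; it cites \cite[Proposition 6.74]{RS}, whose argument has this same skeleton). The issue is that the two places where the real work lies are asserted rather than proved, and as literally stated one of them is not a valid inference.

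First, the claim ``because $u_k\in\Gamma_1(v_0,w_0)$, at very negative $t$ the nearby minimal must be $v_0$ for every $k$'' does not follow: membership in $\Gamma_1(v_0,w_0)$ gives $\|u_k-v_0\|_{L^2(T_i)}\to 0$ as $i\to-\infty$ for each \emph{fixed} $k$, with no uniformity in $k$, so at a fixed far-left window Lemma \ref{lem:6.27} only tells you $u_k$ is close to $v_0$ \emph{or} $w_0$ there. A priori the transition of $u_k$ could drift to $-\infty$ while the $T_0$-constraint is maintained by an extra excursion near $T_0$ (the function dips back toward $v_0$ at $T_0$ and rises again); excluding this requires an energy-excess argument, e.g.\ that such a configuration costs at least roughly $c_1(v_0,w_0)+c_1(w_0,v_0)>0$ (Lemma \ref{lem:6.11}) beyond $c_1(v_0,w_0)$, detected by cut-and-paste comparisons of the type in Lemmas \ref{lem:fdf} and \ref{lem:lemvvv} (this is precisely the $f_1,f_2,g_1,g_2$ gluing mechanism the paper itself uses in the proof of Theorem \ref{thm:6.82}). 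Second, the lower semicontinuity step is glossed: per-slab weak lower semicontinuity gives $J_{1;p,q}(U)\le\liminf_k J_{1;p,q}(u_k)$ only for fixed finite windows, and since the $J_{1,i}$ may be negative, Lemma \ref{lem:2.22} alone only yields $J_1(U)\le c_1(v_0,w_0)+2\bar{K}_1$; to conclude $J_1(U)\le c_1(v_0,w_0)$ you must show the tail sums $J_{1;-\infty,p-1}(u_k)$, $J_{1;q+1,\infty}(u_k)$ are bounded below by $-\epsilon$ uniformly in $k$ for $-p,q$ large, which again is done by gluing $u_k$ to $v_0$ (resp.\ $w_0$) on slabs where it is $W^{1,2}$-close to them and comparing with $c_1$ (or with $0$ for the homoclinic-to-$w_0$ class). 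With these two repairs your argument becomes the cited Rabinowitz--Stredulinsky proof; without them it is an outline of that proof rather than a proof.
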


Solutions of \eqref{eq:PDE} have the following apriori estimates:
\begin{lemma}\label{lem:1}
Suppose $U\in \widehat{\Gamma}_1(v_0,w_0)$ satisfies \eqref{eq:PDE} on $[i-1,i+2]\times \T^{n-1}$ for some $i\in\Z$.
Then
\begin{equation}\label{lem:eq1}
  \begin{split}
\|\nabla (U-v_0)\|_{L^2(T_i)} &\leq C\|U-v_0\|_{L^2(\cup_{j=-1}^{1}T_{i+j})},\\
\textrm{and} \quad 
\|\nabla (U-v_0)\|_{L^2(T_i)} &\leq C\|w_0-v_0\|_{L^2(\cup_{j=-1}^{1}T_{i+j})},
  \end{split}
\end{equation}
where $C=C(F)$ is independent of $U$.
\end{lemma}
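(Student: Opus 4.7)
The plan is to prove this by a standard Caccioppoli estimate applied to the difference $w := U - v_0$. Since $v_0 \in \MM_0$ is a classical solution of \eqref{eq:PDE} on all of $\R^n$ and $U$ solves \eqref{eq:PDE} on $[i-1,i+2]\times\T^{n-1}$, subtracting the two equations and applying the mean value theorem to $F_u(x,\cdot)$ gives
\begin{equation*}
-\Delta w + a(x)\, w = 0 \quad \text{on } [i-1,i+2]\times\T^{n-1},
\end{equation*}
where $a(x) = \int_0^1 F_{uu}(x, v_0 + t(U-v_0))\,\ud t$. Because $F\in C^2$ is $1$-periodic in all arguments, $\|a\|_{L^\infty}\le M$ for a constant $M=M(F)$ independent of $U$.

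Next I would pick a smooth cutoff $\eta:\R\to[0,1]$ depending only on $x_1$, with $\eta\equiv 1$ on $[i,i+1]$, $\mathrm{supp}\,\eta\subset(i-1,i+2)$, and $|\eta'|\le 2$. Test the equation with $\eta^2 w$, which is admissible because $\eta^2 w$ has compact support in the $x_1$-direction inside the strip where the PDE holds, and is periodic in $x_2,\ldots,x_n$. Integration by parts produces no boundary terms and gives
\begin{equation*}
\int \eta^2|\nabla w|^2\,\ud x + 2\int \eta w\,\nabla\eta\cdot\nabla w\,\ud x + \int a(x)\eta^2 w^2\,\ud x = 0.
\end{equation*}
Applying Young's inequality to the cross term (with a small parameter to absorb half of $\int\eta^2|\nabla w|^2$ into the left-hand side) and using $|a|\le M$ yields
\begin{equation*}
\int_{T_i}|\nabla w|^2\,\ud x \le \int \eta^2 |\nabla w|^2\,\ud x \le C(F)\int_{\mathrm{supp}\,\eta}w^2\,\ud x \le C(F)\,\|w\|_{L^2(\cup_{j=-1}^{1}T_{i+j})}^2,
\end{equation*}
which is the first inequality in \eqref{lem:eq1}. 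The second inequality follows immediately: since $U\in\widehat{\Gamma}_1(v_0,w_0)$, we have $v_0\le U\le w_0$, hence $|U-v_0|\le |w_0-v_0|$ pointwise, and the same bound on the right-hand side replaces $U-v_0$ by $w_0-v_0$.

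There is no real obstacle here; the only point that requires a small amount of care is verifying that $\eta^2 w$ is a legitimate test function, i.e., that its support sits strictly inside the region where the equation holds so that no boundary terms appear in the integration by parts. This is guaranteed by choosing $\mathrm{supp}\,\eta\subset(i-1,i+2)$ together with the $x_2,\ldots,x_n$-periodicity. The uniformity of $C$ in $U$ is immediate once one notes that $M=\|F_{uu}\|_\infty$ depends only on $F$ (by periodicity and $C^2$-regularity), and the cutoff $\eta$ can be fixed once and for all.
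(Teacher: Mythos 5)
Your proof is correct, and it is essentially the argument the paper defers to (the proof of (4.71) in Rabinowitz--Stredulinsky): linearize the difference $U-v_0$ via the mean value theorem, use periodicity of $F$ to bound the zero-order coefficient by a constant depending only on $F$, and run the standard Caccioppoli cutoff estimate, with the second inequality following from $v_0\le U\le w_0$. No gaps; the verification that $\eta^2(U-v_0)$ is an admissible test function and the uniformity of the constant are handled correctly.
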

\begin{proof}
The proof follows from \cite[the proof of (4.71)]{RS} and we omit it here.
\end{proof}
%[{cf. \cite[the proof of (4.71)]{RS}}]
The following lemmas provide some simple but useful estimates.
\begin{lemma}\label{lem:fdf}
  \begin{enumerate}

    \item \label{lem:fdf1}
For $u\in\widehat{\Gamma}_1(v_0,w_0)$,
there is a $C_1=C_1(v_0,w_0, F)>0$ such that
\begin{equation*}
\left|J_{1, p}(u)-\frac{1}{2}\|\nabla(u-v_0)\|_{L^{2}\left(T_{p}\right)}^{2}\right| \leq C_1
\end{equation*}
for all $p\in\Z$.

\item \label{lem:fdf2}
For $u\in\widehat{\Gamma}_1(v_0,w_0)$ and $p \leq x_{1} \leq p+1$,
set
\begin{equation}\label{eq:dddd12dd}
\chi:=\left(x_{1}-p\right) u+\left(p+1-x_{1}\right) v_0
\end{equation}
(or $\chi:=\left(x_{1}-p\right) v_0+\left(p+1-x_{1}\right) u$).
then \begin{equation}
J_{1, p}(\chi) \leq \frac{1}{2}\|\nabla(u-v_0)\|_{L^{2}\left(T_{p}\right)}^{2}+C_1,
\end{equation}
where $C_1$ is defined in Lemma \ref{lem:fdf} \eqref{lem:fdf1}.
Moreover, by Lemma \ref{lem:fdf} \eqref{lem:fdf1} we have
$J_{1,p}(\chi)\leq J_{1,p}(u)+2C_1$.

\item \label{lem:fdf3}
Similar conclusions hold if $v_0$ is replaced by $w_0$ in Lemma \ref{lem:fdf} \eqref{lem:fdf2}.
\end{enumerate}
\end{lemma}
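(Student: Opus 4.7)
For part \eqref{lem:fdf1} I expand $J_{1,p}(u)$ around $v_0$. Using the identity $\tfrac12|\nabla u|^2 = \tfrac12|\nabla(u-v_0)|^2 + \nabla v_0\cdot\nabla(u-v_0) + \tfrac12|\nabla v_0|^2$ together with the periodicity identity $\int_{T_p}[\tfrac12|\nabla v_0|^2 + F(x,v_0)]\,\ud x = c_0$ (valid for every $p\in\Z$ since $v_0\in\MM_0$ is $1$-periodic), the claim reduces to bounding
\[
\int_{T_p}\nabla v_0\cdot\nabla(u-v_0)\,\ud x + \int_{T_p}[F(x,u)-F(x,v_0)]\,\ud x
\]
by a constant independent of $u$ and $p$. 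The second integral is dominated by $2\|F\|_{L^\infty}$, which is finite by \eqref{eq:F1}. For the first, I integrate by parts: $\T^{n-1}$-periodicity removes those boundary terms and leaves $-\int_{T_p}(u-v_0)\Delta v_0\,\ud x$ together with boundary contributions on $\{x_1=p\}$ and $\{x_1=p+1\}$. Because $v_0$ solves \eqref{eq:PDE}, $\Delta v_0 = F_u(\cdot,v_0)\in L^\infty$; and since $v_0,w_0\in C^{2,\alpha}$, the pointwise obstacle $|u-v_0|\le\|w_0-v_0\|_{L^\infty}$ passes to traces as well. All remaining quantities are then controlled by constants depending only on $v_0$, $w_0$, $F$, which gives the desired $C_1$.

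For part \eqref{lem:fdf2} I note $\chi-v_0 = (x_1-p)(u-v_0)$, so $v_0\le\chi\le w_0$ on $T_p$ and part \eqref{lem:fdf1} applied to $\chi$ gives $J_{1,p}(\chi)\le\tfrac12\|\nabla(\chi-v_0)\|_{L^2(T_p)}^2+C_1$. From $\nabla(\chi-v_0) = (u-v_0)e_1 + (x_1-p)\nabla(u-v_0)$, expanding the square and handling the cross term via $2(x_1-p)(u-v_0)\partial_{x_1}(u-v_0) = (x_1-p)\partial_{x_1}(u-v_0)^2$ with integration by parts in $x_1$ yields
\[
\int_{T_p}|\nabla(\chi-v_0)|^2\,\ud x = \int_{\T^{n-1}}(u-v_0)^2(p+1,x')\,\ud x' + \int_{T_p}(x_1-p)^2|\nabla(u-v_0)|^2\,\ud x.
\]
Since $(x_1-p)^2\le 1$ on $T_p$ and the trace is dominated by $\|w_0-v_0\|_{L^\infty}^2$, I obtain $\|\nabla(\chi-v_0)\|_{L^2(T_p)}^2 \le \|\nabla(u-v_0)\|_{L^2(T_p)}^2+\|w_0-v_0\|_{L^\infty}^2$; absorbing the extra constant into $C_1$ (still depending only on $v_0$, $w_0$, $F$) produces the first bound. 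The inequality $J_{1,p}(\chi)\le J_{1,p}(u)+2C_1$ then follows by combining with the lower estimate $\tfrac12\|\nabla(u-v_0)\|_{L^2(T_p)}^2\le J_{1,p}(u)+C_1$ from part \eqref{lem:fdf1}. The alternative form $\chi=(x_1-p)v_0+(p+1-x_1)u$ is treated identically, with the trace arising at $x_1=p$ instead.

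For part \eqref{lem:fdf3} I repeat the entire argument with $v_0$ replaced by $w_0$ throughout; the convex-combination structure of $\chi$ again guarantees $v_0\le\chi\le w_0$. Nothing in the argument is technically deep: the only two points requiring care are the integration by parts and the transfer of the pointwise obstacle $v_0\le u\le w_0$ to traces on $\{x_1=p\}$ and $\{x_1=p+1\}$, and both are standard given the $C^{2,\alpha}$-regularity of $v_0,w_0$. I do not anticipate a genuine obstacle.
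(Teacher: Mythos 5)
Your proof is correct and follows essentially the same route as the argument the paper cites from \cite{RS} (the proofs of (2.14) and (2.18)): expand $J_{1,p}$ around $v_0$ (resp.\ $w_0$), use that $v_0$ solves \eqref{eq:PDE} together with the obstacle $v_0\le u\le w_0$ to control the linear term after integration by parts, and treat the interpolant through the identity for $\nabla(\chi-v_0)$. The only bookkeeping point---your bound in part (2) carries the extra term $\tfrac12\|w_0-v_0\|_{L^\infty}^2$, so $C_1$ must simply be taken large enough to serve both parts---is harmless and you already note it.
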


\begin{proof}
For the proof of Lemma \ref{lem:fdf},
please see \cite[the proofs of (2.14) and (2.18)]{RS}.
\end{proof}

\begin{lemma}\label{lem:lemvvv}
  Assume $u\in\widehat{\Gamma}_1(v_0,w_0)$.
  \begin{enumerate}
    \item For any $\gamma>0$, there exists a $\delta=\delta(\gamma)>0$, such that if $\|u-v_0\|_{W^{1,2}(T_i)}\leq \delta$,
     then $|J_{1,i}(u)|< \gamma$. \label{lem:lemvvv1}
     \item Set $\chi:=\left(x_{1}-i\right) u+\left(i+1-x_{1}\right) v_0$
     (or $\chi:=\left(x_{1}-i\right) v_0+\left(i+1-x_{1}\right) u$) on $[i,i+1]\times \T^{n-1}$.
     For any $\gamma>0$, there exists a $\delta=\delta(\gamma)>0$, such that if $\|u-v_0\|_{W^{1,2}(T_i)}\leq \delta$,
     then $|J_{1,i}(\chi)|< \gamma$. \label{lem:lemvvv2}
     \item Similar conclusions hold if $v_0$ is replaced by $w_0$ in 
     Lemma \ref{lem:lemvvv} \eqref{lem:lemvvv1}-\eqref{lem:lemvvv2}. \label{lem:lemvvv3}
\end{enumerate}
\end{lemma}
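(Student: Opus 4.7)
The plan is to show that $J_{1,i}$ is continuous at $v_0$ (and, by symmetry, at $w_0$) in the $W^{1,2}(T_i)$-topology, with a modulus of continuity uniform in $i$. The key starting observation is that $v_0\in\MM_0$ is $1$-periodic in every coordinate, so $\int_{T_i}L(v_0)\,\ud x=\int_{\T^n}L(v_0)\,\ud x=c_0$, and therefore $J_{1,i}(v_0)=0$. Moreover, by standard elliptic regularity $v_0\in C^{2,\alpha}(\R^n)$, and its periodicity gives $M_2:=\|\nabla v_0\|_{L^\infty(\R^n)}<\infty$. Since $u\in\widehat{\Gamma}_1(v_0,w_0)$ satisfies $v_0\leq u\leq w_0$, the range of $u$ lies in a fixed bounded interval, and the $C^2$-regularity of $F$ (on a compact set) yields a Lipschitz constant $M_1=M_1(v_0,w_0,F)$ with $|F(x,u)-F(x,v_0)|\leq M_1|u-v_0|$ pointwise.

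To prove part \eqref{lem:lemvvv1}, I would expand
\[ L(u)-L(v_0)=\tfrac{1}{2}|\nabla(u-v_0)|^2+\nabla v_0\cdot\nabla(u-v_0)+\bigl[F(x,u)-F(x,v_0)\bigr], \]
integrate over $T_i$, and apply Cauchy--Schwarz together with $|T_i|=1$ to obtain
\[ |J_{1,i}(u)|\leq \tfrac{1}{2}\|\nabla(u-v_0)\|_{L^2(T_i)}^2+M_2\|\nabla(u-v_0)\|_{L^2(T_i)}+M_1\|u-v_0\|_{L^2(T_i)}. \]
The right-hand side is less than $\gamma$ as soon as $\|u-v_0\|_{W^{1,2}(T_i)}\leq \delta$ for some $\delta=\delta(\gamma,v_0,w_0,F)$ chosen small enough.

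For part \eqref{lem:lemvvv2}, I would observe that either definition of $\chi$ takes the form $\chi=v_0+\eta(x_1)(u-v_0)$ on $T_i$ with $\eta(x_1)\in\{x_1-i,\,i+1-x_1\}\subset[0,1]$ and $|\eta'|\leq 1$. Hence $\chi$ is a convex combination of $u$ and $v_0$, so $\chi\in[v_0,w_0]$, and the pointwise bounds
\[ |\chi-v_0|\leq |u-v_0|,\qquad |\nabla(\chi-v_0)|\leq |u-v_0|+|\nabla(u-v_0)| \]
give $\|\chi-v_0\|_{W^{1,2}(T_i)}\leq C\|u-v_0\|_{W^{1,2}(T_i)}$ for an absolute constant $C$. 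Applying \eqref{lem:lemvvv1} to $\chi$ in place of $u$ then yields \eqref{lem:lemvvv2}, and \eqref{lem:lemvvv3} follows by the identical argument with $w_0$ substituted for $v_0$, using $\int_{T_i}L(w_0)\,\ud x=c_0$ and $\|\nabla w_0\|_{L^\infty(\R^n)}<\infty$. I do not anticipate a serious obstacle; this is a standard continuity estimate, and the author essentially refers the reader to the analogous computations in \cite[the proofs of (2.14) and (2.18)]{RS}. The only care needed is that the constants $M_1,M_2$ be uniform in $i$, which is automatic from the periodicity of $F$, $v_0$, and $w_0$.
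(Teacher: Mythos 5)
Your argument is correct, and it is essentially the same continuity estimate that the paper relies on: the paper simply refers to the proof of Lemma 2.34 in \cite{RS}, which proceeds by the same expansion of $L(u)-L(v_0)$ about the periodic minimizer (using $J_{1,i}(v_0)=0$, the boundedness of $\nabla v_0$, and the Lipschitz bound on $F$ in $u$), together with the same treatment of the interpolant $\chi$.
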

\begin{proof}
For the proof of Lemma \ref{lem:lemvvv},
please see \cite[the proof of Lemma 2.34]{RS}.
\end{proof}

If $U$ is a solution of \eqref{eq:PDE}, 
by Lemma \ref{lem:1},
Lemma \ref{lem:lemvvv} can be refined as follows.
\begin{lemma}\label{lem:lemvvvu}
  Suppose $U\in \widehat{\Gamma}_1(v_0,w_0)$ satisfies \eqref{eq:PDE} on $[i-1,i+2]\times \T^{n-1}$ for some $i\in\Z$.
  \begin{enumerate}
    \item For any $\gamma>0$, there exists a $\delta=\delta(\gamma)>0$, such that if $\|U-v_0\|_{L^2(\cup_{j=-1}^{1}T_{i+j})}\leq \delta$,
     then $|J_{1,i}(U)|< \gamma$. \label{lem:lemvvvu1}
     \item Set $\chi:=\left(x_{1}-i\right) U+\left(i+1-x_{1}\right) v_0$
     (or $\chi:=\left(x_{1}-i\right) v_0+\left(i+1-x_{1}\right) U$) on $[i,i+1]\times \T^{n-1}$.
     For any $\gamma>0$, there exists a $\delta=\delta(\gamma)>0$, such that if $\|U-v_0\|_{L^2(\cup_{j=-1}^{1}T_{i+j})}\leq \delta$,
     then $|J_{1,i}(\chi)|< \gamma$. \label{lem:lemvvvu2}
     \item Similar conclusions hold if $v_0$ is replaced by $w_0$ 
     in Lemma \ref{lem:lemvvvu} \eqref{lem:lemvvvu1}-\eqref{lem:lemvvvu2}. \label{lem:lemvvvu3}
\end{enumerate}
\end{lemma}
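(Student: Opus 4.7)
The plan is to derive Lemma~\ref{lem:lemvvvu} as a direct corollary of Lemma~\ref{lem:lemvvv} by using the interior gradient estimate from Lemma~\ref{lem:1}. The key observation is that smallness in the $W^{1,2}$-norm on $T_i$ --- which is what Lemma~\ref{lem:lemvvv} requires --- is automatic from smallness in the $L^2$-norm on $\cup_{j=-1}^{1}T_{i+j}$ whenever $U$ solves \eqref{eq:PDE} near $T_i$, because Lemma~\ref{lem:1} bounds $\|\nabla(U-v_0)\|_{L^2(T_i)}$ by a multiple of $\|U-v_0\|_{L^2(\cup_{j=-1}^{1}T_{i+j})}$.

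For part~\eqref{lem:lemvvvu1}, given $\gamma>0$, I would first invoke Lemma~\ref{lem:lemvvv}~\eqref{lem:lemvvv1} to obtain a threshold $\delta'=\delta'(\gamma)>0$ ensuring $|J_{1,i}(u)|<\gamma$ whenever $\|u-v_0\|_{W^{1,2}(T_i)}\leq \delta'$. Then, with $C=C(F)$ the constant of Lemma~\ref{lem:1}, I would choose $\delta:=\delta'/\sqrt{1+C^2}$; the hypothesis $\|U-v_0\|_{L^2(\cup_{j=-1}^{1}T_{i+j})}\leq \delta$ together with Lemma~\ref{lem:1} immediately gives $\|U-v_0\|_{W^{1,2}(T_i)}^{2}\leq (1+C^2)\delta^{2}=(\delta')^{2}$, so Lemma~\ref{lem:lemvvv}~\eqref{lem:lemvvv1} yields $|J_{1,i}(U)|<\gamma$.

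For part~\eqref{lem:lemvvvu2}, a short product-rule computation on $\chi-v_0=(x_1-i)(U-v_0)$ on $T_i$ shows that $\|\chi-v_0\|_{W^{1,2}(T_i)}$ is controlled by $\|U-v_0\|_{L^2(T_i)}+\|\nabla(U-v_0)\|_{L^2(T_i)}$, hence by a multiple of $\|U-v_0\|_{L^2(\cup_{j=-1}^{1}T_{i+j})}$ via Lemma~\ref{lem:1}. Applying Lemma~\ref{lem:lemvvv}~\eqref{lem:lemvvv2} with a correspondingly shrunk $\delta$ then delivers $|J_{1,i}(\chi)|<\gamma$. Part~\eqref{lem:lemvvvu3} is identical after interchanging $v_0$ and $w_0$ throughout.

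There is no genuine obstacle here; the argument is purely a matter of chaining two already-established estimates. The only point requiring attention is that Lemma~\ref{lem:1} needs the PDE hypothesis on the slab $[i-1,i+2]\times \T^{n-1}$, but this is precisely what the statement of Lemma~\ref{lem:lemvvvu} provides, so the reduction goes through without modification.
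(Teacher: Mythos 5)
Your proposal is correct and coincides with the paper's intended argument: the paper presents Lemma \ref{lem:lemvvvu} precisely as the refinement of Lemma \ref{lem:lemvvv} obtained by feeding the gradient bound of Lemma \ref{lem:1} into the $W^{1,2}$-smallness hypothesis, which is exactly your chaining of the two estimates. The quantitative choice $\delta=\delta'/\sqrt{1+C^2}$ and the product-rule remark for the interpolant are fine (the latter is even slightly more than needed, since Lemma \ref{lem:lemvvv} \eqref{lem:lemvvv2} already takes the $W^{1,2}$ bound on $U-v_0$ as its hypothesis).
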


%%%%%%%%%%%%%%%%%%%%%%%%%%%%%%%%%%%%%%%%%%%%%%%%%%%%%%%
%%%%%%%%%%%%%%%%%%%%%%%%%%%%%%%%%%%%%%%%%%%%%%%%%%%%%%%
\section{Proof of Theorem \ref{thm:6.8}}\label{sec:3}

In this section,
we prove Theorem \ref{thm:6.8}.
Since the proof is similar to \cite[the Proof of Theorem 6.8]{RS},
we will be brief.
Assume $(u_k)$ is a minimizing sequence for \eqref{eq:6.7}.
Following \cite[the Proof of Theorem 6.8]{RS} with obvious modifications,
we have a $U\in\widehat{\Gamma}_1(v_0,w_0)$  satifying 
\begin{itemize}%{enumerate}[(i)]
  \item $u_k \to U$ in $W^{1,2}_{loc}(\R\times\T^{n-1})$ as $k\to \infty$
and thus \eqref{eq:6.5} holds for $u$ replaced by $U$;
\item 
%\begin{equation}\label{eq:7.2}
  $-\bar{K}_1\leq J_{1}(U) \leq M+2 \bar{K}_{1}$
  %\end{equation}
with $\bar{K}_{1}$ defined in Lemma \ref{lem:2.8};
\item $U$ is a solution of \eqref{eq:PDE} except possibly for the $4K$ integral constraint regions.
\end{itemize}%{enumerate}
We need to show:
\begin{enumerate}[(A)]

\item for any $i$, $1\leq i\leq 4K$, there exists an 
$\ell_{0,i}:= \ell_{0,i}(\bar{\rho},\rho_i, v_0,w_0,F)\in \N,$
such that 
for $\ell_i\geq \ell_{0,i}$,
there is an $X_{\tilde{j}(i)}$ in the corresponding integral
constraint region such that $U$ satisfies \eqref{eq:PDE} in the interior of $X_{\tilde{j}(i)}$;

\item there exist $\ell_{1,1}:=\ell_{1,1}(\bar{\rho},\rho_{1},v_0,w_0,F)$, 
$\ell_{1,4K}:=\ell_{1,4K}(\bar{\rho},\rho_{4K},v_0,w_0,F)\in \N$,
%(here $\sigma' _1,\sigma' _{4K}>0$ are fixed number), 
such that
for $\ell_{1}\geq \ell_{1,1}$ and $\ell_{4K}\geq \ell_{1,4K}$, 
$U$ satisfies \eqref{eq:6.6} and therefore $U\in Y_{m,\ell}$;

\item $J_1(U)=b_{m,\ell}$;
 
\item there exist
$\ell_{2,i}:=\ell_{2,i}(\rho_{i-2},\rho_{i-1},\rho_{i+1},\rho_{i+2},\bar{\rho},\rho_{i},v_0,w_0,F)$ 
($1\leq i\leq 4K$),
$m_{0,j}:=m_{0,j}(\rho_{4j+1},\rho_{4j+2},v_0,w_0,F)$ 
and $m_{1,j}:=m_{1,j}(\rho_{4j+3},\rho_{4j+4},v_0,w_0,F)\in\N$ ($1\leq j\leq K$)
with $\rho_{-1}=\rho_{0}=\rho_{1}$ and $\rho_{4K+1}=\rho_{4K+2}=\rho_{4K+3}=\rho_{4K+4}=\rho_{4K}$,
such that
for $\ell_i\geq \ell_{2,i}$,
$m_{4j+2}-m_{4j+1}\geq m_{0,j}$ 
and $m_{4j+4}-m_{4j+3}\geq m_{1,j}$, % 
$U$ satisfies \eqref{eq:PDE} in the integral constraint regions;

\item $U$ satisfies \eqref{eq:6.9}.

\end{enumerate}
\bigskip

\emph{Proof of (A).}
Let $\MMR_i$ be  the $i$-th integral  constraint  regions.
Choose $\sigma=\sigma(\bar{\rho},\rho_i)$ such that
\begin{equation}\label{eq:7.3}
0<\sigma< 
\min\left(\rho_{i}, \bar{\rho}-\rho_{i}\right).
\end{equation}
Set
$M_0:=M_0(v_0,w_0,F):=4C_1+1$ with $C_1=C_1(v_0,w_0,F)$ 
defined in Lemma \ref{lem:fdf}.
We claim that if $\sigma$ satisfies \eqref{eq:7.3},
there is an $\ell_{0,i}:=\ell_{0,i}(\sigma, M_0+2C_1)\in \N$ 
such that whenever $\ell_i\geq \ell_{0,i}$,
\begin{equation}\label{eq:6.28-1}
  \|U-\phi\|_{L^2(X_{\tilde{j}(i)})}\leq \sigma
\end{equation}
for some $X_{\tilde{j}(i)}\subset \MMR_i$ and $\phi=\phi(\ell_i)\in \{v_0,w_0\}$.

We shall prove the claim for $i=4j+2$ and the other cases can be proved similarly.
Firstly we have
\begin{equation}\label{eq:lkdjgd872}
  J_{1;m_{4j+2}-1,m_{4j+2}+\ell_{4j+2}}(U)\leq M_0.
\end{equation}
If
\eqref{eq:lkdjgd872} is false,
then $J_{1;m_{4j+2}-1,m_{4j+2}+\ell_{4j+2}}(U)> M_0,$
so for $k$ large enough, 
\begin{equation}\label{eq:3.200}
J_{1;m_{4j+2}-1,m_{4j+2}+\ell_{4j+2}}(u_k)> M_0.
\end{equation}
Let
\begin{equation*}
  \tilde{u}_k=\left\{
  \begin{array}{ll}
  u_k, & x_1\leq m_{4j+2}-2 \textrm{ or } x_1\geq m_{4j+2}+\ell_{4j+2}+2,  \\
  %\left[(m_{4j+2}-1)-x_1\right]u_k \\
  %+ \left[x_1 -(m_{4j+2}-2) \right]w_0, & m_{4j+2}-2 \leq x_1\leq m_{4j+2}-1,\\
  w_0, & m_{4j+2}-1\leq x_1\leq m_{4j+2}+\ell_{4j+2}+1,\\
  %\left[(m_{4j+2}+\ell_{4j+2}+2)-x_1\right]w_0\\
  %+ \left[x_1 -(m_{4j+2}+\ell_{4j+2}+1) \right]u_k, &  m_{4j+2}+\ell_{4j+2}+1 \leq x_1\leq m_{4j+2}+\ell_{4j+2}+2,\\
  %u_k, & x_1\geq m_{4j+2}+\ell_{4j+2}+2,
  \end{array}
  \right.
\end{equation*}
and interpolate as in \eqref{eq:dddd12dd}.
%with  the  usual  interpolation in  between.
Hence $\tilde{u}_k\in Y_{m,\ell}$.
Since
\begin{equation*}
  \begin{split}
     J_1(\tilde{u}_k) & = J_{1;-\infty, m_{4j+2}-3}(u_k)+J_{1,m_{4j+2}-2}(\tilde{u}_k)+ J_{1; m_{4j+2}-1,m_{4j+2}+\ell_{4j+2}}(w_0)\\
& \quad +J_{1,m_{4j+2}+\ell_{4j+2}+1}(\tilde{u}_k)+ J_{1; m_{4j+2}+\ell_{4j+2}+2,\infty}(u_k),
  \end{split}
\end{equation*}
we have
\begin{equation*}
  \begin{split}
J_1(\tilde{u}_k)=&J_1(u_k)-J_{1; m_{4j+2}-1,m_{4j+2}+\ell_{4j+2}}(u_k)\\
&\quad +[J_{1,m_{4j+2}-2}(\tilde{u}_k)-J_{1,m_{4j+2}-2}(u_k)]\\
&\quad+[J_{1,m_{4j+2}+\ell_{4j+2}+1}(\tilde{u}_k)-J_{1,m_{4j+2}+\ell_{4j+2}+1}(u_k)].
\end{split}
\end{equation*}
Thus by \eqref{eq:3.200} and Lemma \ref{lem:fdf} \eqref{lem:fdf3},
\begin{equation*}
 b_{m,\ell}\leq  J_1(\tilde{u}_k)< J_1(u_k) - M_0 + 4C_1= J_1(u_k)-1,
\end{equation*}
contradicting $u_k$ is a minimizing sequence for $b_{m,\ell}$.
Hence \eqref{eq:lkdjgd872} holds.

Now set
\begin{equation*}
  \tilde{U}=\left\{
              \begin{array}{ll}
                v_0, & x_1\leq m_{4j+2}-1 \textrm{ or }  x_1\geq m_{4j+2}+\ell_{4j+2}+1, \\
                U, & m_{4j+2}\leq x_1\leq m_{4j+2}+\ell_{4j+2},
              \end{array}
            \right.
\end{equation*}
and interpolate as in \eqref{eq:dddd12dd}.
%with  the  usual  interpolation in  between.
Then 
by Lemma \ref{lem:fdf} \eqref{lem:fdf2},
$J_1(\tilde{U})\leq M_0+4C_1$.
By Lemma \ref{lem:6.27},
%there are an $X_{\tilde{j}(i)}\subset\MMR_i$ and $\phi_i\in\{v_0,w_0\}$ such that
our claim of \eqref{eq:6.28-1}
holds if $U$ is replaced by $\tilde{U}$.
Noting 
\begin{equation}\label{eq:7.4}
  \left\|\tilde{U}-\phi_{i}\right\|_{L^{2}\left(X_{\tilde{j}(i)}\right)} 
=
\left\|U-\phi_{i}\right\|_{L^{2}\left(X_{\tilde{j}(i)}\right)},
\end{equation}
\eqref{eq:6.28-1} holds.

Similar to \cite{RS},
the choice of $\sigma$ in \eqref{eq:7.3} implies
$\phi_{i}=v_{0}$ if 
\begin{equation*}
  \begin{split}
\mathcal{R}_{i}=\mathcal{R}_{4j+1} &:=\left[m_{4j+1}-\ell_{4j+1}, m_{4j+1}\right] \times \mathbb{T}^{n-1},\\ 
\textrm{or } \quad \mathcal{R}_{i}=\mathcal{R}_{4j+4} &:=\left[m_{4j+4}, m_{4j+4}+\ell_{4j+4}\right] \times \mathbb{T}^{n-1},
  \end{split}
\end{equation*}
and
$\phi_{i}=w_{0}$ if 
\begin{equation*}
  \begin{split}
    \mathcal{R}_i=\mathcal{R}_{4j+2} &:=\left[m_{4j+2}-\ell_{4j+2}, m_{4j+2}\right] \times \mathbb{T}^{n-1},\\ 
    \textrm{or }\quad \mathcal{R}_{i}=\mathcal{R}_{4j+3} &:=\left[m_{4j+3}, m_{4j+3}+\ell_{4j+3}\right] \times \mathbb{T}^{n-1}.
  \end{split}
\end{equation*}
\begin{comment}
Thus \eqref{eq:7.3}
shows that $U$ satisfies the integral constraint for these special $X_{\tilde{j}(i)}$’s with strict inequality.
Then so does $u_k$ for large $k$.
Hence for $z\in X_{\tilde{j}(i)}$ and $r$ sufficiently small,
the proof of Lemma \ref{lem:3.2} (please see \cite[Proof of (A) of Theorem 3.2]{RS}) 
shows that $U$ is a solution of \eqref{eq:PDE} in the interior of $X_{\tilde{j}(i)}$.
\end{comment}
The rest of the proof of (A) is same to \cite[p. 82, \textit{Proof of (A)}]{RS},
and we omit it here.

\bigskip

\emph{Proof of (B).}
The proof follows as that of \cite[p. 82, \textit{Proof of (B)}]{RS}.
Noting in this step $\sigma_{4K}$ should be small,
we have to enlarge $\ell_{4K}$.
The lower bound for $\ell_{4K}$, depending on $\sigma_{4K}$, 
will be denoted by $\ell_{1,4K}:=\ell_{1,4K}(\sigma_{4K}, \ell_{0,4K})$.

\bigskip

\emph{Proof of (C).}
Since the proof is same to \cite[p. 84, \textit{Proof of (C)}]{RS},
we omit it here.

\bigskip

\emph{Proof of (D).}
Similar to \cite[p. 84, Proof of (D)]{RS},
we shall only show 
there is strictly inequality with $u=U$ in \eqref{eq:6.5} (i)-(ii).

Suppose by contradiction 
for some $i$ in \eqref{eq:6.5} (i)-(ii) there is equality. Then
\begin{equation}\label{eq:7.24}
\|U-\phi\|_{L^{2}\left(T_{i}\right)}=\rho,
\end{equation}
where $(\phi, \rho)=\left(v_{0}, \rho_{4j+1}\right)$ or
$\left(w_{0}, \rho_{4j+2}\right)$.
Next we assume $0<j<K-1$ and omit the proof of the cases of
%For the case
$j=0$ and $j=K-1$ (for the proof of these cases, please see \cite[p. 84, \emph{Proof of (D)}]{RS}).
By Lemma \ref{lem:6.27},
there are a 
$\widehat{q}\in [m_{4j}+2, m_{4j}+\ell_{4j}-3]\cap \Z$
and
a 
$q\in [m_{4j+3}-\ell_{4j+3}+2, m_{4j+3}-3]\cap \Z$
such that
\begin{equation*}
\begin{split}
\left\|U-v_{0}\right\|_{L^{2}\left(X_{\widehat{q}}\right)} &\leq  \sigma_{4j},\\
\left\|U-w_{0}\right\|_{L^{2}\left(X_{q}\right)} &\leq  \sigma_{4j+3},
\end{split}
\end{equation*}
where $\sigma_{4j}$, $\sigma_{4j+3}$ will be determined later.
By Lemma \ref{lem:1}, we have
\begin{equation*}
\begin{split}
  & \|\nabla (U-v_{0})\|_{L^{2}\left(\cup_{j=-1}^{1}{T_{\widehat{q}+j}}\right)} \leq
  M_3  \left\|U-v_{0}\right\|_{L^{2}\left(X_{\widehat{q}}\right)} ,\\
&\|\nabla (U-w_{0})\|_{L^{2}\left(\cup_{j=-1}^{1}{T_{q+j}}\right)} \leq
M_3 \left\|U-w_{0}\right\|_{L^{2}\left(X_{q}\right)} ,
\end{split}
\end{equation*}
so
\begin{equation}\label{eq:7.25}
\begin{split}
  &\left\|U-v_{0}\right\|_{W^{1,2}\left(\cup_{j=-1}^{1}{T_{\widehat{q}+j}}\right)} \leq (M_3 +1) \sigma_{4j},\\
  &\left\|U-w_{0}\right\|_{W^{1,2}\left(\cup_{j=-1}^{1}{T_{q+j}}\right)} \leq (M_3 +1) \sigma_{4j+3}.
  \end{split}
\end{equation}
Define $U^*$ via
\begin{equation}\label{eq:7.26}
U^{*}=\left\{\begin{array}{ll}
U, & x_{1} \leq \widehat{q}-1, \\
v_{0}, & \widehat{q} \leq x_{1} \leq \widehat{q}+1, \\
U, & \widehat{q}+2 \leq x_{1} \leq q-1 ,\\
w_{0}, & q \leq x_{1} \leq q+1, \\
U, & q+2 \leq x_{1},
\end{array}\right.
\end{equation}
and interpolate as in \eqref{eq:dddd12dd}.
%and interpolate as usual elsewhere.
Then %as in \eqref{eq:4.71},
by \eqref{eq:7.25} and \eqref{eq:7.26},
there is a function $\kappa(\theta,\theta')$ with $\kappa(\theta,\theta')\to 0$ as $\theta, \theta'\to 0$ such that
\begin{equation}\label{eq:7.27}
\left|J_{1}(U)-J_{1}\left(U^{*}\right)\right| \leq \kappa(\sigma_{4j},\sigma_{4j+3}).
\end{equation}

Define
\begin{equation*}%\label{eq:7.280}
\Theta=\left\{\begin{array}{ll}
U^{*}, &  x_{1}\leq \widehat{q},\\
v_{0}, & \widehat{q}\leq x_{1} , \\
\end{array}\right.
\end{equation*}
\begin{equation*}%\label{eq:7.28}
\Phi=\left\{\begin{array}{ll}
v_{0}, & x_{1}\leq \widehat{q},\\
U^{*}, & \widehat{q}\leq x_{1} \leq q+1, \\
w_{0}, & q+1 \leq x_{1},
\end{array}\right.
\end{equation*}
and
\begin{equation*}%\label{eq:7.29}
\Psi=\left\{\begin{array}{ll}
w_{0} & x_{1} \leq q, \\
U^{*}, & q \leq x_{1}.
\end{array}\right.
\end{equation*}
Note that
$\tau_{i}^{1} \Phi \in \Lambda_{4j}\left(v_{0}, w_{0}\right)$.
Therefore by Lemma \ref{lem:6.74},
\begin{equation}\label{eq:7.30}
J_{1}(\Phi)=J_{1}\left(\tau_{i}^{1} \Phi\right) \geq d_{4j}\left(v_{0}, w_{0}\right).
\end{equation}
Since $\Psi \in Y_{m',\ell'}$ with $m'=(m_{4j+3}, \cdots, m_{4K})$ and $\ell'=(\ell_{4j+3}, \cdots, \ell_{4K})$
and obvious modification of \eqref{eq:6.6},
\begin{equation}\label{eq:7.31}
J_{1}(\Psi) \geq  b_{m',\ell'}.  
\end{equation}
Similarly
\begin{equation}\label{eq:7.31-1}
J_{1}(\Theta) \geq  b_{\widehat{m},\widehat{\ell}},  
\end{equation}
with $\widehat{m}=(m_{1}, \cdots, m_{4j})$ and $\widehat{\ell}=(\ell_{1}, \cdots, \ell_{4j})$.
Observing that
\begin{equation}\label{eq:7.32}
J_{1}\left(U^{*}\right)=J_{1}(\Theta)+J_{1}(\Phi)+J_{1}(\Psi),
\end{equation}
by \eqref{eq:7.27}-\eqref{eq:7.32} we have:
\begin{equation}\label{eq:7.33}
J_{1}(U) \geq b_{\widehat{m},\widehat{\ell}}+ d_{4j}\left(v_{0}, w_{0}\right)+b_{m',\ell'}-\kappa(\sigma_{4j},\sigma_{4j+3}).
\end{equation}

On the other hand, 
we can estimate $J_1(U)$ to obtain 
an upper bound as in \cite{RS}.
Indeed, let 
\begin{equation}\label{eq:ch.eps}
  \epsilon := \frac{1}{2}\left[d_{4j}(v_0,w_0)-c_1(v_0,w_0)\right].
\end{equation}
By Lemma \ref{lem:6.74}, $\epsilon>0$.
Since $d_{4j}(v_0,w_0)$ depends on $\rho_{4j+1},\rho_{4j+2},v_0,w_0$,
so is $\epsilon$. %=\epsilon(v_0,w_0,\rho_{4j+1},\rho_{4j+2})$.
For this fixed $\epsilon$,
there exists $m_{0,j}=m_{0,j}(\epsilon)\in\N$,
such that if $m_{4j+2}-m_{4j+1}\geq m_{0,j}$
then 
we can find $V_{1} \in \mathcal{M}_1\left(v_{0}, w_{0}\right)$,
$U_1 ^{\epsilon}\in Y_{\widehat{m},\widehat{\ell}}$
and $U_2 ^{\epsilon}\in Y_{m',\ell'}$,
($U_1 ^{\epsilon}$ and $U_2 ^{\epsilon}$ need not to be 
minimizers of $J_1$ on $Y_{\widehat{m},\widehat{\ell}}$ and $Y_{m',\ell'}$)
satisfying if
\begin{equation}\label{eq:7.34}
\widehat{U}=\left\{\begin{array}{ll}
U_1^{\epsilon}, & x_{1} \leq \widehat{q}-1, \\
v_{0}, & \widehat{q} \leq x_{1} \leq \widehat{q}+1 ,\\
V_{1}, & \widehat{q}+2\leq x_{1} \leq q-1, \\
w_{0}, & q \leq x_{1} \leq q+1 ,\\
U_2^{\epsilon}, & q+2 \leq x_{1},
\end{array}\right.
\end{equation}
then
\begin{equation}\label{eq:7.35}
J_{1}(U) \leq J_{1}(\widehat{U}) \leq b_{\widehat{m},\widehat{\ell}}+c_{1}\left(v_{0}, w_{0}\right)+b_{m',\ell'}+\epsilon.
\end{equation}
By \eqref{eq:7.33}-\eqref{eq:7.35},
\begin{equation}\label{eq:7.36}
\epsilon=\frac{1}{2}\left[d_{4j}\left(v_{0}, w_{0}\right)-c_{1}\left(v_{0}, w_{0}\right) \right]\leq \kappa(\sigma_{4j},\sigma_{4j+3}).
\end{equation}
Finally choosing 
$\sigma_{4j}=\sigma'' _{4j}(\epsilon):=\sigma'' _{4j}(\rho_{4j+1},\rho_{4j+2},v_0,w_0)$ 
and $\sigma_{4j+3}=\sigma'' _{4j+3}(\epsilon):=\sigma'' _{4j+3}(\rho_{4j+1},\rho_{4j+2},v_0,w_0)$ 
so small---of course, $\ell_{4j}$, $\ell_{4j+3}$ 
should be large enough, for example satisfying
$$\ell_{4j}\geq \ell_{2,4j}:=\ell_{2,4j}(\rho_{4j+1},\rho_{4j+2},\bar{\rho},\rho_{4j},v_0,w_0,F):=\ell_{2,4j}(\sigma'' _{4j},\ell_{0,4j}),$$
$$\ell_{4j+3}\geq \ell_{2,4j+3}:=\ell_{2,4j+3}(\rho_{4j+1},\rho_{4j+2},\bar{\rho},\rho_{4j+3},v_0,w_0,F):=\ell_{2,4j+3}(\sigma'' _{4j+3},\ell_{0,4j+3})$$
---such that
\begin{equation}\label{eq:7.37}
\kappa(\sigma_{4j},\sigma_{4j+3})< \epsilon 
\end{equation}
holds. 
But \eqref{eq:7.36} and \eqref{eq:7.37} are not compatible. 
Thus we have a contradiction and (D) is proved.

\bigskip

\emph{Proof of (E).}
Since the proof is same to \cite[p. 86, \textit{Proof of (E)}]{RS},
we omit it here.

\bigskip

Thus we complete the proof of Theorem \ref{thm:6.8}.

%%%%%%%%%%%%%%%%%%%%%%%%%%%%%%%%%%%%%%%%%%%%%%%%%%%%%%%%%%%%%%%%%%%%%%%%%%%%%%
%%%%%%%%%%%%%%%%%%%%%%%%%%%%%%%%%%%%%%%%%%%%%%%%%%%%%%%%%%%%%%%%%%%%%%%%%%%%%%
\section{Infinite transition solutions}\label{sec:4}

Let us turn to the construction of infinite transition solutions.
We follow \cite{RS} to construct three cases of infinite transition solutions (cf. \cite[p. 89]{RS}):
\begin{enumerate}[(i)]
  \item $m=(m_k)_{k\in\N}$ with $m_k\to \infty$ as $k\to \infty$; \label{enum:1}
  \item $m=(m_k)_{k\in-\N}$ with $m_k\to -\infty$ as $k\to -\infty$; \label{enum:2}
  \item $m=(m_k)_{k\in\Z}$ with $m_k\to \pm\infty$ as $k\to \pm\infty$. \label{enum:3} 
  %and $m_k\to \infty$ as $k\to \infty$. \label{enum:3}
\end{enumerate}
As pointed out by Rabinowitz and Stredulinsky (\cite[pp. 89-90]{RS}),
when one want to construct infinite transition solutions by \cite[Theorem 8.1]{RS},
there are two possible obstacles one have to face:
\begin{enumerate}[(D-a)]
  \item 
The lesser one is to show that for cases \eqref{enum:1} and \eqref{enum:2},
the infinite transition solution 
$U^*$ has the appropriate asymptotic behavior. \label{diff:a}
\item 
A more serious difficulty is in applying \cite[Theorem 8.1]{RS} to find $u^* _j$.
That result requires
$\ell, m_{2i}-m_{2i -1} \gg 0$
and a priori $\ell$ and the difference in the $m_i$’s will depend on $j$
and possibly go to $\infty$ as $j\to \infty$. \label{diff:b}
\end{enumerate}
For the easier case of type \eqref{enum:3},
%since in this case
%the difficulty 
(D-\ref{diff:a}) does not appear.
In our proof of Theorem \ref{thm:6.8},
only Step (D) may lead to (D-\ref{diff:b})
for an infinite sequence $\{\rho_i\}_{i\in\Z}$.
To overcome this difficulty,
we set 
\begin{equation}\label{numbera}
  \#\{\rho_i\}_{i\in\Z}<\infty,
\end{equation}
where $\# A$ is the cardinality of the set $A$.
Then $\ell_{2,i}, m_{0,i}, m_{1,i}$ depend on finitely many $\rho_i$'s
and thus cannot go to $\infty$ as $i\to \infty$.
With the above observation,
we have

\begin{theorem}\label{thm:6.81}
  Let $F$ satisfy \eqref{eq:F1}.
  Assume that \eqref{eq:*0} and \eqref{eq:*1} holds.
  %$\MM_1(v_0,w_0)$ and $\MM_1(w_0,v_0)$.
  Let $m=(m_i)_{i\in\Z}$ and $\ell=(\ell_i)_{i\in\Z}$.
  Then  for sufficiently  large $\ell_i\in\N$ and $m_{i+1}-m_i \in\N$,
  there is a solution $U$ of \eqref{eq:PDE} in $\widehat{\Gamma}(v_0,w_0)$ experiencing infinite transitions.
  \end{theorem}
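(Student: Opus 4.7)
The plan is to construct $U$ as a $C^2_{loc}$-limit of $2K$-transition solutions produced by Theorem~\ref{thm:6.8} as $K\to\infty$. The crucial preparatory step is the observation made explicit in \eqref{numbera}: since the set $\{\rho_i\}_{i\in\Z}$ is finite, the thresholds $\ell_{0,i}, \ell_{1,i}, \ell_{2,i}, m_{0,j}, m_{1,j}$ appearing in steps (A)--(E) of the proof of Theorem~\ref{thm:6.8} depend only on $F,v_0,w_0$ and finitely many $\rho_i$'s, and therefore take only finitely many values as $i,j$ range over $\Z$. Let $L^*$ and $M^*$ be uniform upper bounds. I would fix the doubly infinite sequences $(\ell_i), (m_i)$ so that $\ell_i \geq L^*$ and $m_{i+1}-m_i \geq M^*$ for every $i\in\Z$, and so that the separation conditions \eqref{eq:6.3}--\eqref{eq:apart5} hold globally.

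For each $K\in\N$, let $(m^K,\ell^K)$ be the truncation of $(m,\ell)$ to a window of $4K$ consecutive indices around $0$ (re-indexed to $\{1,\dots,4K\}$). By the uniform choice of $L^*, M^*$, the hypotheses of Theorem~\ref{thm:6.8} are met for every such truncation, yielding a classical solution $U_K\in\widehat{\Gamma}_1(v_0,w_0)$ of \eqref{eq:PDE} on $\R\times\T^{n-1}$ satisfying the $4K$ transition constraints \eqref{eq:6.5} and the decay \eqref{eq:6.6} as $|x_1|\to\infty$. Since $v_0\leq U_K\leq w_0$ pointwise and uniformly, interior Schauder estimates applied to \eqref{eq:PDE} yield uniform bounds on $(U_K)$ in $C^{2,\alpha}_{loc}(\R\times\T^{n-1})$ for every $\alpha\in(0,1)$. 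A standard diagonal Arzel\`a--Ascoli extraction then produces a subsequence $U_{K_j}\to U$ in $C^2_{loc}$, where $U\in\widehat{\Gamma}_1(v_0,w_0)$ is a classical solution of \eqref{eq:PDE} on all of $\R\times\T^{n-1}$.

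To finish, I would verify that $U$ obeys the prescribed transitions at every index $i\in\Z$. For any fixed $i$, once $K_j$ is large enough the truncation window contains the slab responsible for the $i$-th constraint, so $U_{K_j}$ satisfies \eqref{eq:6.5} at index $i$, and $C^2_{loc}$ (hence $L^2_{loc}$) convergence passes the constraint to $U$. Consequently $U$ oscillates between $v_0$-close and $w_0$-close states infinitely often as $x_1\to\pm\infty$, giving an infinite transition solution of type \eqref{enum:3}. As noted in the excerpt, the asymptotic difficulty (D-\ref{diff:a}) does not arise in case \eqref{enum:3}, since no boundary condition at $\pm\infty$ is required. The main obstacle is the uniformity in the first step: without \eqref{numbera} the thresholds coming from step (D) of the proof of Theorem~\ref{thm:6.8} could grow with $|i|$, so no single choice of $(\ell_i,m_i)$ would work for all truncations simultaneously. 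The finite-cardinality hypothesis \eqref{numbera} is precisely what makes the diagonal argument go through.
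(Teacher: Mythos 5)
Your proposal is correct and takes essentially the same route as the paper: the finiteness condition \eqref{numbera} makes the thresholds arising in steps (A)--(E) of the proof of Theorem \ref{thm:6.8} uniform in the index, so one fixed admissible choice of $(m_i,\ell_i)_{i\in\Z}$ works for every truncated window, and the infinite transition solution is then the $C^{2}_{loc}$ limit (via uniform elliptic estimates and a diagonal extraction) of the $2K$-transition solutions, with the closed constraints \eqref{eq:6.5} passing to the limit. This is precisely the argument the paper intends, delegating the compactness details to the proof of Theorem 8.32 in \cite{RS}, and since type \eqref{enum:3} requires no asymptotic condition, no further step is needed.
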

For the proof of Theorem \ref{thm:6.81},
we refer the reader to \cite[the proof of Theorem 8.32]{RS}.
\begin{remark}
  In Section \ref{sec:1},
  we say (D-\ref{diff:b}) is overcomed by copying the $2$-transition solution infinite times.
  In fact, this is a special case of \eqref{numbera}.
  By changing $(\rho_i)_{i\in\Z}$ in \eqref{numbera},
  we obtain uncountable number of infinite transition solutions.
    \end{remark}
Next we consider infinite transition solutions of type \eqref{enum:1}.
Type \eqref{enum:2} can be treated similarly.
Before that we make a remark on the proof of Theorem \ref{thm:6.8}.
\begin{remark}\label{rem:d0}
  To construct infinite transition solution of type \eqref{enum:1}, 
  we need another restriction on $\sigma=\sigma(\bar{\rho},\rho_1)$ 
  (and thus $\ell_{0,1}$),
  which appears in the proof of Theorem \ref{thm:6.8} (the proof of (A)).
For $\gamma:= [c_1(v_0,w_0)+c_1(w_0,v_0)]/6>0$,
by Lemma \ref{lem:lemvvvu},
there exists a $\delta=\delta(\gamma)>0$,
such that if $u$ is a solution of \eqref{eq:PDE} on $[i-1,i+2]\times \T^{n-1}$ satisfying $\|u-v_0\|_{L^2(\cup_{j=-1}^{1}T_{i+j})}<\delta$,
then 
\begin{equation}
  |J_{1,i}(u)|<\gamma.
\end{equation}
We ask $\sigma=\sigma(\bar{\rho},\rho_1)<\delta(\gamma)$ for all $K$,
thus the solutions $U_{2K}$ obtaining by Theorem \ref{thm:6.8} satisfy
\begin{equation}\label{eq:u2k}
  |J_{1,\tilde{j}(1)}(U_{2K})|<\gamma,
\end{equation}
where $\tilde{j}(1)$ is obtained in the proof of (A) in Theorem \ref{thm:6.8}
and $\tilde{j}(1)$ may depend on $K$.

Moreover, set 
  \begin{equation*}
    f_1:= \left\{
      \begin{array}{ll}
        U_{2K}, & x_1\leq \tilde{j}(1), \\
        v_0, & x_1\geq \tilde{j}(1)+1,
      \end{array}
    \right.
    \end{equation*}
    and
    \begin{equation*}
      f_2:= \left\{
        \begin{array}{ll}
          v_0, & x_1\leq \tilde{j}(1), \\
          U_{2K}, & x_1\geq \tilde{j}(1)+1,
        \end{array}
      \right.
      \end{equation*}
      and interpolate as in \eqref{eq:dddd12dd}.
      %with the usual interpolation in between.
By Lemma \ref{lem:lemvvvu},
we have 
\begin{equation}\label{eq:f1f2}
  |J_{1,\tilde{j}(1)}(f_1)|<\gamma \quad \textrm{and} \quad|J_{1,\tilde{j}(1)}(f_2)|<\gamma.
\end{equation}
\begin{comment}
  $$
  \widehat{f} = \left\{\begin{array}{ll}
    v_0, & x_1\leq \tilde{j}(1),\\
    U, & x_1\geq \tilde{j}(1)+1,
  \end{array}
  \right.
  $$
  with the usual interpolation in between.
  Then there is a $\kappa_1$ satisfying $\kappa_1(\theta)\to 0$ as $\theta\to 0$ such that
  $$|J_{1,\tilde{j}(1)}(\widehat{f})-J_{1,\tilde{j}(1)}(U)|\leq \kappa_1(\sigma).$$
  By Lemma \ref{lem:6.11}, $c_1(v_0,w_0)+c_1(w_0,v_0)>0$. 
  Then let $\sigma$ small enough such that
  \begin{equation}\label{eq:kappa1}
    |J_{1,\tilde{j}(1)}(\widehat{f})-J_{1,\tilde{j}(1)}(U)|\leq \kappa_1(\sigma) < \frac{1}{2}[c_1(v_0,w_0)+c_1(w_0,v_0)].
  \end{equation}
\end{comment}
  \end{remark}

  \begin{theorem}\label{thm:6.82}
    Let $F$ satisfy \eqref{eq:F1}.
    Assume that \eqref{eq:*0} and \eqref{eq:*1} holds.
    Let $m=(m_i)_{i\in\N}$ and $\ell=(\ell_i)_{i\in\N}$.
    Then  for sufficiently  large $\ell_i\in\N$ and $m_{i+1}-m_i \in\N$,
    there is a solution $U$ of \eqref{eq:PDE} experiencing infinite transitions and
    \begin{equation}\label{eq:6.922}
    \left\|U-v_{0}\right\|_{C^{2}\left(T_{i}\right)} \rightarrow 0 \quad \text { as }i \rightarrow -\infty.
    \end{equation}
    \end{theorem}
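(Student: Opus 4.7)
The plan is to obtain $U$ as a $C^{2,\alpha}_{loc}$ limit of $2K$-transition solutions from Theorem \ref{thm:6.8}, using the energy bounds of Remark \ref{rem:d0} to preserve the left asymptotic in the limit. For each $K\in\N$, apply Theorem \ref{thm:6.8} to the truncated data $(m_1,\dots,m_{4K})$ and $(\ell_1,\dots,\ell_{4K})$, with thresholds $\rho_i$ drawn from a fixed finite set (as in \eqref{numbera}) so that the quantities $\ell_{0,i},\ell_{1,i},\ell_{2,i},m_{0,j},m_{1,j}$ produced by Theorem \ref{thm:6.8} are uniform in $K$; in addition, require the smallness of $\sigma=\sigma(\bar\rho,\rho_1)$ prescribed by Remark \ref{rem:d0}. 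The resulting minimizer $U_K\in Y_{m|_K,\ell|_K}$ solves \eqref{eq:PDE} and satisfies $\|U_K-v_0\|_{C^2(T_i)}\to 0$ as $|i|\to\infty$. Since $v_0\le U_K\le w_0$, elliptic regularity furnishes uniform $C^{2,\alpha}_{loc}$ bounds; a diagonal extraction yields a subsequence converging in $C^{2,\alpha}_{loc}$ to a classical solution $U\in\widehat\Gamma_1(v_0,w_0)$ of \eqref{eq:PDE}. The constraints \eqref{eq:6.5} are local in $x$ and pass to the limit, so $U$ satisfies \eqref{eq:6.5} for every $j\in\N$ and thereby experiences infinitely many transitions.

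The main obstacle is \eqref{eq:6.922}, i.e., the analogue of difficulty (D-\ref{diff:a}). The cut points $\tilde j(1)^{(K)}\in[m_1-\ell_1,m_1]\cap\Z$ take values in a fixed finite set, so along a further subsequence $\tilde j(1)^{(K)}\equiv\tilde j_*$. Consider the competitor
\[
\tilde U_K := \begin{cases} v_0, & x_1\le\tilde j_*,\\ U_K, & x_1\ge\tilde j_*+1, \end{cases}
\]
interpolated on $T_{\tilde j_*}$ as in \eqref{eq:dddd12dd}. A direct verification---using that $\tilde j_*$ lies in a ``near $v_0$'' constraint region and the contraction property of the interpolant from Lemma \ref{lem:fdf}---shows $\tilde U_K\in Y_{m|_K,\ell|_K}$, so minimality of $U_K$ combined with the two estimates of Remark \ref{rem:d0} gives $J_{1;-\infty,\tilde j_*-1}(U_K)\le 2\gamma$ uniformly in $K$. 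Passing to the limit via Lemma \ref{lem:2.22} and Fatou's lemma (applicable because $L-c_0$ is bounded below on the strip $v_0\le u\le w_0$) yields $J_{1;-\infty,\tilde j_*-1}(U)<\infty$.

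Let $f_1$ denote the truncation of $U$ at $\tilde j_*$ completed by $v_0$ for $x_1\ge\tilde j_*+1$. Then $J_1(f_1)<\infty$ and $f_1$ solves \eqref{eq:PDE} on $(-\infty,\tilde j_*-1)\times\T^{n-1}$, so Lemma \ref{lem:6.53} furnishes $\varphi\in\{v_0,w_0\}$ with $\|f_1-\varphi\|_{C^2(T_i)}\to 0$ as $i\to-\infty$. To rule out $\varphi=w_0$: if it held, then for large $K$ one selects cut points $p_K\ll\tilde j_*$ (where $U_K$ is close to $v_0$ by \eqref{eq:6.6}) and $\tilde j_*$ (where $U_K$ is close to $v_0$ by the choice of $\sigma$) and replaces $U_K$ on $[p_K,\tilde j_*]$ by $v_0$; the resulting function lies in $Y_{m|_K,\ell|_K}$ and strictly beats $U_K$ in energy, because the suppressed ``bump'' of $U_K$---close to $w_0$ somewhere in $[p_K,\tilde j_*]$ by $C^{2,\alpha}_{loc}$ convergence---carries excess energy at least $c_1(v_0,w_0)+c_1(w_0,v_0)-O(\gamma)$, which exceeds the interpolation losses $O(\gamma)$ by the calibration $\gamma=[c_1(v_0,w_0)+c_1(w_0,v_0)]/6$ of Remark \ref{rem:d0}. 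This contradiction forces $\varphi=v_0$, and since $f_1\equiv U$ on $(-\infty,\tilde j_*-1]\times\T^{n-1}$, \eqref{eq:6.922} follows. The delicate points to monitor are the admissibility of each competitor against every constraint in \eqref{eq:6.5} and the fine calibration of $\gamma$ that guarantees strictly positive excess energy for the hypothetical ``bump''; the type \eqref{enum:2} case is treated by a symmetric argument.
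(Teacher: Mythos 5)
Your overall strategy is the same as the paper's: uniform-in-$K$ multitransition solutions from Theorem \ref{thm:6.8} (finite threshold set plus the extra smallness of $\sigma$ from Remark \ref{rem:d0}), a $C^2_{loc}$ limit $U$, a comparison argument plus Lemma \ref{lem:2.22} giving finite energy of the left tail, Lemma \ref{lem:6.53} for the dichotomy $\varphi\in\{v_0,w_0\}$, and an energy argument based on $c_1(v_0,w_0)+c_1(w_0,v_0)>0$ to exclude $\varphi=w_0$. Up to and including the finiteness of $J_{1;-\infty,\tilde j_*-1}(U)$ your steps are sound.

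The gap is in the exclusion of $\varphi=w_0$: with your competitor (replace $U_K$ by $v_0$ on the \emph{bounded} interval $[p_K,\tilde j_*]$, interpolating as in \eqref{eq:dddd12dd} at both ends) the bookkeeping does not close under the calibration $\gamma=[c_1(v_0,w_0)+c_1(w_0,v_0)]/6$ that you import from Remark \ref{rem:d0}. Minimality gives only
$J_{1;p_K+1,\tilde j_*-1}(U_K)\le |J_{1,p_K}(\chi_1)|+|J_{1,p_K}(U_K)|+|J_{1,\tilde j_*}(\chi_2)|+|J_{1,\tilde j_*}(U_K)|<4\gamma$,
while the lower bound for the suppressed ``bump'' obtained by cutting at $p_K$, $q$ and $\tilde j_*$ and comparing with $c_1(v_0,w_0)$, $c_1(w_0,v_0)$ costs five more cells (two interpolants at $q$, the cell $J_{1,q}(U_K)$, and one interpolant at each of $p_K$ and $\tilde j_*$), so it only yields $J_{1;p_K+1,\tilde j_*-1}(U_K)>c_1(v_0,w_0)+c_1(w_0,v_0)-5\gamma=\gamma$. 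Since $\gamma<4\gamma$, the two estimates are compatible and no contradiction follows; even if you drive the $p_K$-errors to $0$ using \eqref{eq:6.6} for the fixed $U_K$, you get $2\gamma-\epsilon<J_{1;p_K+1,\tilde j_*-1}(U_K)<2\gamma+2\epsilon$, still consistent. The fix is either to recalibrate (any $\gamma\le[c_1(v_0,w_0)+c_1(w_0,v_0)]/9$, i.e.\ a smaller $\sigma$ in Remark \ref{rem:d0}, makes your competitor work), or to avoid the extra cut at $p_K$ altogether: compare with the unbounded replacement (your $\tilde U_K$, equal to $v_0$ for $x_1\le\tilde j_*$), which gives the strict bound $J_{1;-\infty,\tilde j_*-1}(U_K)<2\gamma$, and obtain the lower bound by splitting at $q$ only, using that $U_K\to v_0$ as $x_1\to-\infty$; this costs exactly four cells at $q$ and $\tilde j_*$ and gives $J_{1;-\infty,\tilde j_*-1}(U_K)>c_1(v_0,w_0)+c_1(w_0,v_0)-4\gamma=2\gamma$, a genuine contradiction. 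This last accounting, with the splittings $U_{2K}\mapsto f_1,f_2$ at $\tilde j(1)$ and $f_1\mapsto g_1,g_2$ at $q$ (six error cells against $6\gamma$), is precisely how the paper closes the argument.
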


\begin{proof}
%\emph{Proof of Theorem \ref{thm:6.82}.}
It suffices to show \eqref{eq:6.922}.
Take $(\rho_i)_{i\in\N}$ satisfying \eqref{eq:6.2} and \eqref{numbera}.
Then we can select $\ell_i\in\N$ and $m_{i+1}-m_i \in\N$ large enough,
such that for any $K>1$, 
$$J_1(U_{2K})=\inf_{u\in Y_{\bar{m}, \bar{\ell}}}J_1(u)$$
with $\bar{m}=\{m_i\}_{i=1}^{4K}$ and $\bar{\ell}=\{\ell_i\}_{i=1}^{4K}$.
The limit $U$ of $U_{2K}$ (maybe up to a subsequence) satisfies
$\rho_{-}\left(\tau_{-i}^{1} U\right) \leq \rho_{1}$, 
$ i \leq m_{1}-1$.
Similar proof of \eqref{eq:lkdjgd872} shows that
$J_{1;-\infty ,m_1}(U_{2K})\leq \widetilde{M}_0$
for some $\widetilde{M}_0>0$ independent of $K$.
So for $q<m_1$,
by Lemma \ref{lem:2.22},
\begin{equation*}
  J_{1;q,m_1}(U)=\lim_{K\to \infty}J_{1;q,m_1}(U_{2K})
  \leq \lim_{K\to \infty}[J_{1;-\infty,m_1}(U_{2K})+ \bar{K}_1]
  \leq \widetilde{M}_0+\bar{K}_1.
\end{equation*}
Hence $J_{1;-\infty,m_1}(U)\leq \widetilde{M}_0+\bar{K}_1$.
Set
\begin{equation*}
  \tilde{U}=\left\{
              \begin{array}{ll}
                U, & x_1\leq m_1, \\
                v_0, & x_1\geq m_1+1,
              \end{array}
            \right.
\end{equation*}
and interpolate as in \eqref{eq:dddd12dd}.
%with the usual interpolation in between.
Then $J_1(\tilde{U})<\infty$.
By Lemma \ref{lem:6.53},
one obtain
\begin{equation}\label{eq:ldld}
  \|U-\phi\|_{C^{2}\left(T_{i}\right)} = 
\|\tilde{U}-\phi\|_{C^{2}\left(T_{i}\right)} \rightarrow 0 \quad \text { as }i \rightarrow - \infty
\end{equation}
with $\phi=v_0$ or $w_0$.
If $\phi = v_0$, 
\eqref{eq:6.922} holds.
Next we shall show $\phi=w_0$ is impossible
and thus complete the proof of Theorem \ref{thm:6.82}.

Suppose by contradiction $\phi=w_0$.
By \eqref{eq:ldld},
for any $\sigma>0$,
there exists a $q\in\Z$ with $q<m_1 - \ell_1 -6$, such that
$\| U- w_0 \|_{C^2(X_q)}<\sigma$.
Then for $K$ large enough,
we have $\| U_{2K} - w_0 \|_{C^2(X_q)}<\sigma$.
%
%Since $U_{2K}$ satisfies \eqref{eq:6.6},
%for any $K$, there exists a $p=p(K)\in\Z$,
%such that $p<q-6$ and $\| U_{2K} - v_0 \| _{C^2(X_p)}<\sigma$.

Set 
\begin{equation*}
f_1:= \left\{
  \begin{array}{ll}
    U_{2K}, & x_1\leq \tilde{j}(1), \\
    v_0, & x_1\geq \tilde{j}(1)+1,
  \end{array}
\right.
\end{equation*}
and
\begin{equation*}
  f_2:= \left\{
    \begin{array}{ll}
      v_0, & x_1\leq \tilde{j}(1), \\
      U_{2K}, & x_1\geq \tilde{j}(1)+1,
    \end{array}
  \right.
  \end{equation*}
  and interpolate as in \eqref{eq:dddd12dd}.
  %with the usual interpolation in between.
  Thus $f_2\in Y_{\bar{m},\bar{\ell}}$ with $\bar{m}:= \{m_i\}_{i=1}^{4K}$ and $\bar{\ell}:=\{\ell_i\}_{i=1}^{4K}$,
  so 
  \begin{equation*}
  J_1(f_2)\geq b_{\bar{m},\bar{\ell}}.
  \end{equation*}
  By the definition of $U_{2K}$,
  \begin{equation*}
    J_1(U_{2K}) = b_{\bar{m},\bar{\ell}}.
    \end{equation*}
Then
\begin{equation}\label{eq:j1u2k}
  \begin{split}
J_1(U_{2K}) = &J_{1; -\infty, \tilde{j}(1)-1}(U_{2K}) + J_{1; \tilde{j}(1), \infty}(U_{2K})\\
=& J_{1;-\infty, \tilde{j}(1)-1}(f_1) + J_{1,\tilde{j}(1)}(U_{2K})+ J_{1; \tilde{j}(1)+1, \infty}(f_2)\\
=& [J_{1}(f_1)-J_{1,\tilde{j}(1)}(f_1)] + J_{1,\tilde{j}(1)}(U_{2K}) +[J_1(f_2)-J_{1,\tilde{j}(1)}(f_2)]\\
\geq & J_{1}(f_1) + b_{\bar{m},\bar{\ell}} -|J_{1,\tilde{j}(1)}(f_1)| - |J_{1,\tilde{j}(1)}(U_{2K})|- |J_{1,\tilde{j}(1)}(f_2)|.
  \end{split}
\end{equation}
By \eqref{eq:u2k} and \eqref{eq:f1f2}, 
for $\sigma \in (0, \delta(\gamma))$ as in Remark \ref{rem:d0},
we have
\begin{equation}\label{eq:3ineq}
|J_{1,\tilde{j}(1)}(f_1)|<\gamma, \quad |J_{1,\tilde{j}(1)}(U_{2K})|<\gamma \quad
\textrm{and}\quad |J_{1,\tilde{j}(1)}(f_2)|<\gamma,
\end{equation}
where $\gamma := [c_1(v_0,w_0)+ c_1(w_0,v_0)]/6>0$.
Thus by \eqref{eq:j1u2k},
\begin{equation}\label{eq:bj1ineq}
  b_{\bar{m},\bar{\ell}}> J_{1}(f_1) + b_{\bar{m},\bar{\ell}} - \frac{c_1(v_0,w_0)+ c_1(w_0,v_0)}{2}.
\end{equation}
If 
\begin{equation}\label{eq:j1ineq}
  J_{1}(f_1) \geq \frac{c_1(v_0,w_0)+ c_1(w_0,v_0)}{2},
\end{equation} 
then we obtain a contradiction and complete the proof of Theorem \ref{thm:6.82}.

To this end,
set
\begin{equation*}
  g_1:= \left\{
    \begin{array}{ll}
      f_{1}, & x_1\leq q, \\
      w_0, & x_1\geq q+1,
    \end{array}
  \right.
  \end{equation*}
and
\begin{equation*}
  g_2:= \left\{
    \begin{array}{ll}
      w_0, & x_1\leq q, \\
      f_{1}, & x_1\geq q+1,
    \end{array}
  \right.
  \end{equation*}
  and interpolate as in \eqref{eq:dddd12dd}.
  %with the usual interpolation in between.
  Thus $g_1\in \Gamma_1(v_0,w_0)$ and $g_2\in \Gamma_1(w_0,v_0)$,
  so $J_1(g_1)\geq c_1(v_0,w_0)$ and $J_1(g_2)\geq c_1(w_0,v_0)$.
Then 
\begin{equation*}
\begin{split}
J_1(f_1) = &J_{1; -\infty, q-1}(f_1) + J_{1,q}(f_1) + J_{1;q+1,\infty}(f_1)\\
=&[J_1(g_1)-J_{1,q}(g_1)]+ J_{1,q}(f_1) + [J_1(g_2)-J_{1,q}(g_2)]\\
\geq &c_1(v_0,w_0) + c_1(w_0,v_0) - |J_{1,q}(f_1)|-|J_{1,q}(g_1)|-|J_{1,q}(g_2)|\\
\geq &\frac{c_1(v_0,w_0)+c_1(w_0,v_0)}{2},
\end{split}
\end{equation*}
where the last inequality is obtained  
similar to \eqref{eq:3ineq}-\eqref{eq:bj1ineq}.
Hence \eqref{eq:j1ineq} holds and \eqref{eq:bj1ineq} leads to a contradiction, 
which completes the proof of Theorem \ref{thm:6.82}.
\end{proof}

Let us conclude this section by
showing that infinite transition solutions obtained by Theorems
\ref{thm:6.81} and \ref{thm:6.82}
are locally minimal\footnote{We thank Professor Zhi-Qiang Wang for suggesting this proposition.}.
We emphasize our construction of infinite trasition solutions
%different with \cite{RS},
does not depend on local minimality property.

\begin{proposition}\label{prop:local}
Assume $U$ is an infinite transition solution obtained by 
Theorem \ref{thm:6.81} or Theorem \ref{thm:6.82}.
Then $U$ is locally minimal.
\end{proposition}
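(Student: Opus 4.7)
Plan: The strategy is to transfer local minimality from the approximating $2K$-transition solutions $U_{2K}$ to their $C^2_{loc}$-limit $U$ via a cutoff-gluing argument, in the spirit of \cite[Proposition 8.12]{RS} but carried out at the level of the limit. Fix $z\in\R\times\T^{n-1}$ and a small $r>0$, and let $W\in E_{r,z}$. After replacing $W$ by $\max(v_0,\min(W,w_0))$ (a standard truncation that does not increase $I_{r,z}$, since $v_0,w_0$ are minimal solutions of \eqref{eq:PDE}), I may assume $v_0\leq W\leq w_0$.

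First I would verify that each $U_{2K}$ is locally minimal on balls of sufficiently small radius. The constrained minimizer $U_{2K}$ satisfies $v_0<U_{2K}<w_0$ strictly (strong maximum principle applied to $U_{2K}-v_0$ and $w_0-U_{2K}$), and the tile constraints defining $Y_{m,\ell}$ hold with strict slack---this is (D) of Theorem \ref{thm:6.8}. Any competitor $W'$ coinciding with $U_{2K}$ outside $B_{r'}(z)$ and satisfying $v_0\leq W'\leq w_0$ changes the $L^2$-norm $\|\cdot-\phi\|_{L^2(T_i)}$ on each tile by at most $\|w_0-v_0\|_{L^\infty}|B_{r'}(z)|^{1/2}$; for $|B_{r'}(z)|$ small enough the tile constraints are preserved, so $W'\in Y_{m,\ell}$ and $J_1(W')\geq J_1(U_{2K})$. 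Subtracting the parts on which $W'$ and $U_{2K}$ agree yields $I_{r',z}(W')\geq I_{r',z}(U_{2K})$.

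Next I would glue across scales. Let $0<\epsilon\ll 1$ and let $\lambda$ be a smooth cutoff equal to $0$ on $B_r(z)$ and to $1$ outside $B_{r+\epsilon}(z)$; set $W_K:=(1-\lambda)W+\lambda U_{2K}$. Then $W_K$ agrees with $U_{2K}$ outside $B_{r+\epsilon}(z)$, $v_0\leq W_K\leq w_0$, and because $U_{2K}\to U$ in $C^2_{loc}$ and $W=U$ on the annulus $B_{r+\epsilon}(z)\setminus B_r(z)$, we have $W_K\to W$ in $W^{1,2}(B_{r+\epsilon}(z))$ as $K\to\infty$. Splitting $I_{r+\epsilon,z}=I_{r,z}+I_{\mathrm{ann}}$ and invoking local minimality of $U_{2K}$ on $B_{r+\epsilon}(z)$ gives
\[
I_{r,z}(U_{2K})+I_{\mathrm{ann}}(U_{2K})\leq I_{r,z}(W)+I_{\mathrm{ann}}(W_K);
\]
letting $K\to\infty$ and using the strong convergence on both pieces produces $I_{r,z}(U)\leq I_{r,z}(W)$, i.e.\ the local minimality of $U$ in $B_r(z)$.

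The main obstacle is ensuring that the strict slack of $U_{2K}$ in the tile constraints is \emph{uniform in $K$}, so that the local minimality of $U_{2K}$ applies on a fixed ball $B_{r+\epsilon}(z)$ whose radius is independent of $K$. I anticipate handling this in two complementary ways. First, the gap $d_{4j}(v_0,w_0)-c_1(v_0,w_0)>0$ driving the proof of (D) is $K$-independent, and combined with the apriori bound $J_1(U_{2K})\leq M+2\bar K_1$ from Section \ref{sec:3} this should yield a quantitative slack independent of $K$. Second, for a fixed $r$ and $K$ large, every tile meeting $B_{r+\epsilon}(z)$ lies in a region where $U_{2K}$ is so close to $v_0$ or $w_0$ (inheriting the asymptotic behavior of $U$ via $C^2_{loc}$-convergence) that the constraint $\|\cdot-\phi\|_{L^2(T_i)}\leq\rho_i$ holds with large uniform slack; this route bypasses any reliance on a quantified (D) estimate. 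Either route suffices to close the argument, and the same scheme applies simultaneously to Theorems \ref{thm:6.81} and \ref{thm:6.82}.
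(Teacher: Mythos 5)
Your proposal follows essentially the same route as the paper's proof: establish local minimality of the approximating $2K$-transition constrained minimizers (which the paper simply cites as Lemma \ref{prop:8.12}, i.e.\ \cite[Proposition 8.12]{RS}, rather than re-deriving it via truncation and constraint slack), then glue an arbitrary competitor to $U_{2K}$ across an annulus and pass to the limit using the $C^{2}_{loc}$ convergence $U_{2K}\to U$. The only cosmetic difference is your smooth cutoff in place of the paper's radial linear interpolation on $B_{r'}(z)\setminus B_r(z)$, so the argument is correct and matches the paper's approach.
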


The proof of Proposition \ref{prop:local} depends on 
the following lemma.
Set $$\mathcal{M}(Y_{m,\ell}(v_0,w_0)):=\{u\in Y_{m,\ell} \,|\, J_1(u)=b_{m,\ell}\}.$$
\begin{lemma}[{\cite[Proposition 8.12]{RS}}]\label{prop:8.12}
  Any $V\in \mathcal{M}(Y_{m,\ell}(v_0,w_0))$ 
  is locally minimal in $B_r(z)$ 
  for any $z\in \R \times \T^{n-1}$ and small $r > 0$.
\end{lemma}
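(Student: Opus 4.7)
The plan is a truncation-and-comparison argument that exploits two facts: the \emph{strict} satisfaction of the integral constraints \eqref{eq:6.5} by $V$ (furnished by step (D) in the proof of Theorem \ref{thm:6.8}), and the minimality of $V$ in $Y_{m,\ell}$.

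Fix $z\in\R\times\T^{n-1}$ and choose $r>0$ small enough that $B_r(z)$ lies inside at most two adjacent slabs $T_{i_0}\cup T_{i_0+1}$. Let $u\in E_{r,z}$. First I would truncate to produce an ordered competitor: set
$$\tilde{u}:=\max\bigl(v_0,\min(u,w_0)\bigr).$$
Outside $B_r(z)$ we have $u=V\in[v_0,w_0]$, hence $\tilde u=V$ there, and in particular $v_0\le\tilde u\le w_0$ everywhere. Using the lattice identity $L(f\vee g)+L(f\wedge g)=L(f)+L(g)$ a.e., together with the fact that $v_0,w_0\in\MM_0$ minimize $\int L$ against compactly supported perturbations (Class A minimality in Moser's theory), one obtains $I_{r,z}(\tilde u)\le I_{r,z}(u)$ in two standard steps: apply the identity to the pair $(u,v_0)$ on $B_r(z)$ (with $u\wedge v_0=v_0$ outside $B_r(z)$) to pass to $u\vee v_0$, then repeat with $w_0$ on the pair $(u\vee v_0,w_0)$ to arrive at $\tilde u$.

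Next I would verify $\tilde u\in Y_{m,\ell}$. Condition \eqref{eq:6.6} for $\tilde u$ is automatic because $\tilde u=V$ outside the bounded set $B_r(z)$ and $V$ satisfies \eqref{eq:6.6}. For each index $i$ appearing in \eqref{eq:6.5} with $i\notin\{i_0,i_0+1\}$, the norms $\|\tilde u-\phi\|_{L^2(T_i)}$ for $\phi\in\{v_0,w_0\}$ coincide with those of $V$; for $i\in\{i_0,i_0+1\}$ the triangle inequality gives
$$\|\tilde u-\phi\|_{L^2(T_i)}\le\|V-\phi\|_{L^2(T_i)}+\|w_0-v_0\|_{L^\infty}\,|B_r(z)|^{1/2}.$$
Since $V$ satisfies every inequality in \eqref{eq:6.5} strictly, shrinking $r$ if necessary preserves all of them, and $\tilde u\in Y_{m,\ell}$ follows.

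Finally, since $\tilde u=V$ outside $B_r(z)\subset T_{i_0}\cup T_{i_0+1}$, only these two boxes can contribute to any difference in $J_1$, so
$$J_1(\tilde u)-J_1(V)=I_{r,z}(\tilde u)-I_{r,z}(V).$$
Minimality of $V$ on $Y_{m,\ell}$ gives $J_1(V)\le J_1(\tilde u)$, hence $I_{r,z}(V)\le I_{r,z}(\tilde u)\le I_{r,z}(u)$, which is the desired local minimality. The main obstacle is the admissibility check: guaranteeing that the perturbation in $B_r(z)$ does not push any $L^2$ norm past its threshold $\rho_k$. This is exactly where the strict integral inequalities from step (D) of Theorem \ref{thm:6.8} are used, and the quantitative bound above shows how small $r$ must be (depending only on $V$, $z$, and the gaps $\rho_k-\|V-\phi\|_{L^2(T_i)}$). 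The truncation step and the reduction of $J_1$ to $I_{r,z}$ are routine.
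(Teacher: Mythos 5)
Your cut-and-paste skeleton is sound as far as it goes: the truncation $\tilde u=\max(v_0,\min(u,w_0))$ together with the lattice identity and the minimality of $v_0,w_0$ against compactly supported perturbations does give $I_{r,z}(\tilde u)\le I_{r,z}(u)$, and since $\tilde u=V$ off $B_r(z)$ the difference $J_1(\tilde u)-J_1(V)$ does localize to $I_{r,z}(\tilde u)-I_{r,z}(V)$. The genuine gap is the admissibility step. You force $\tilde u\in Y_{m,\ell}$ by invoking \emph{strict} inequalities in \eqref{eq:6.5} for $V$, i.e.\ step (D) of the proof of Theorem \ref{thm:6.8}. But the lemma concerns an arbitrary $V\in\mathcal{M}(Y_{m,\ell}(v_0,w_0))$: strictness of the constraints is not a hypothesis, and it is only established under the additional requirement that $m_{4j+2}-m_{4j+1}$ and $m_{4j+4}-m_{4j+3}$ be sufficiently large (that largeness is exactly what step (D) uses, through the insertion of a heteroclinic between the constraint blocks and the gap $d_{4j}(v_0,w_0)>c_1(v_0,w_0)$). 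For a constrained minimizer with an active constraint, your competitor $\tilde u$ may leave $Y_{m,\ell}$, and then the comparison $J_1(\tilde u)\ge b_{m,\ell}$, which is the heart of your argument, is unavailable. This is precisely the situation Proposition 8.12 of \cite{RS} is built to handle: as remarked right after the lemma, its proof uses only that $V$ minimizes $J_1$ over $Y_{m,\ell}$ --- neither strict constraints nor the fact that $V$ solves \eqref{eq:PDE} --- so your route proves a strictly weaker statement than the one quoted.

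A second, related defect is quantitative: in your argument the admissible radius $r$ depends on $V$ through the slack $\min_k\left(\rho_k-\|V-\phi\|_{L^2(T_i)}\right)$ over the constrained slabs. In the only place the lemma is used here (Proposition \ref{prop:local}) it is applied with a single fixed radius $r'$ to the whole sequence $V_k$ of multitransition solutions; step (D) is a contradiction argument and yields no uniform positive lower bound on these slacks, so with your version of the lemma one cannot exclude that the admissible $r'$ shrinks with $k$. What the application needs is smallness of $r$ depending only on structural data ($\bar\rho$, the finitely many values $\rho_i$ guaranteed by \eqref{numbera}, $v_0$, $w_0$, $F$), as in \cite{RS}. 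So either reproduce the Rabinowitz--Stredulinsky argument at active constraints, or supplement your proof with a uniform quantitative strictness estimate; as written, the admissibility check --- which you yourself identify as the main obstacle --- is the missing piece.
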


A careful reading of the proof of Lemma \ref{prop:8.12} in \cite{RS} shows
there only involves $V$ minimizes $J_1$ over $Y_{m,\ell}$.
%the proof of Proposition \ref{prop:8.12} only involves $V$ minimizes $J_1$ over $Y_{m,\ell}$,
Thus Lemma \ref{prop:8.12} holds for the multitransition solutions of Theorem \ref{thm:6.8}.

\bigskip
\noindent\emph{Proof of Proposition \ref{prop:local}.}
For any $z\in \R \times \T^{n-1}$ and small $r > 0$ with $B_r(z)\subset \R\times \T^{n-1}$, 
we need to show $U$ 
  minimizes
  $$I_{r,z} =\int_{B_r(z)}L(u) \mathrm{d} x$$
  over
  $$
  E^{\infty}_{r,z} := \{u\in W^{1,2}_{loc}(\R\times \T^{n-1}) \,|\, u=U \text{ on }  (\R\times \T^{n-1})\setminus B_r (z)\}.
$$

Fix any $u\in E^{\infty}_{r,z}$.
Choose $r' >r$ with $r'$ small.
Select $V_k$ of Theorem \ref{thm:6.8} such that $V_k \to U$ in $C^2_{loc}(\R\times \T^{n-1})$ as $k\to \infty$.
By Lemma \ref{prop:8.12}, 
we have 
\begin{equation}\label{eq:I}
  I_{r',z}(\tilde{u}) \geq I_{r',z}(V_k)
\end{equation}
for any $\tilde{u} \in E^{k}_{r',z}:= \{u\in W^{1,2}_{loc}(\R\times \T^{n-1}) \,|\, u=V_k \text{ on }  (\R\times \T^{n-1})\setminus B_{r'} (z)\}$.
Define
\begin{equation*}
v_k = \left\{
  \begin{aligned}
    u & , & x\in B_r(z), \\
    \frac{\sqrt{x_1 ^2 + \cdots +x_n ^2} -r}{r' - r}V_k + \frac{r' - \sqrt{x_1 ^2 + \cdots +x_n ^2}}{r' - r}u & , & x\in B_{r'}(z)\setminus B_r(z), \\
    V_k & , & x\in \R\times \T^{n-1}\setminus B_{r'}(z).
    \end{aligned}
  \right.
\end{equation*}
Then $v_k\in E^{k}_{r',z}$.
By \eqref{eq:I},
\begin{equation}\label{eq:I1}
  I_{r',z}(v_k) \geq I_{r',z}(V_k).
\end{equation}
Notice $V_k \to U$ in $C^2_{loc}(\R\times \T^{n-1})$ as $k\to \infty$.
Letting $k\to \infty$ in \eqref{eq:I1},
we have
\begin{equation*}%\label{eq:I2}
  I_{r',z}(u) \geq I_{r',z}(U).
\end{equation*}
%Taking $r'\to r$ in \eqref{eq:I2},
%we have
But $u=U$ on $B_{r'}(z)\setminus B_r(z)$,
so
\begin{equation*}%\label{eq:I3}
  I_{r,z}(u) \geq I_{r,z}(U),
\end{equation*}
completing the proof of Proposition \ref{prop:local}.
\qed

%%%%%%%%%%%%%%%%%%%%%%%%%%%%%%%%%%%%%%%%%%%%%%%%%%%%%%%%%%%%%%%%%%%%%%
%%%%%%%%%%%%%%%%%%%%%%%%%%%%%%%%%%%%%%%%%

\subsection*{Acknowledgments}
The author is greatly indebted to
Professor Zhi-Qiang Wang (Utah State University) 
for his active interest, valuable encouragements and helpful discussions 
during the preparation of the paper. 
The author is supported by NSFC: 12201162.

%%%%%%%%%%%%%%%%%%%%%%%%%%%%%%%%%%%%%%%%%%%

%%%%%%%%%%%%%%%%%%%%%%%%%%%%%%%%%%%%%%%%%%%%%%


\begin{thebibliography}{99}

  \bibitem{Aubry}S. Aubry, P. Y. Le Daeron,
  \emph{The discrete Frenkel--Kontorova model and its extensions. I. Exact results for the ground-states}, 
Phys. D 8 (1983), no. 3, 381-422. 

\bibitem{Bangert1} V. Bangert, 
\emph{The existence of gaps in minimal foliations}, 
Aequ. Math. 34 (1987), no. 2, 153-166. 

\bibitem{Bangert2} V. Bangert, 
\emph{A uniqueness theorem for $\Z^n$-periodic variational problems}, 
Comment. Math. Helv. 62 (1987), no. 1, 511-531.

\bibitem{Bangert} V. Bangert, 
\emph{On minimal laminations of the torus},
Ann. Inst. H. Poincar\'{e} Anal. Non Lin\'{e}aire. 6 (1989), no. 2, 95-138.

\bibitem{DynRep} V. Bangert,
\emph{Mather sets for twist maps and geodesics on tori},
Dynamics reported. Vieweg+ Teubner Verlag, Wiesbaden, 1988: 1-56.


\bibitem{Bo1}S. Bolotin, P. H. Rabinowitz, 
\emph{A note on heteroclinic solutions of mountain pass type for a class of nonlinear elliptic PDE's}, 
In: Contributions to Nonlinear Analysis, 105-114. 
%Progress in Nonlinear Differential Equations and Their Applications, vol. 66. 
%Basel: Birkh$\ddot{a}$user, 2006.
Progr. Nonlinear Differential Equations Appl., 66, Birkh$\ddot{a}$user, Basel, 2006. 


\bibitem{Bo2}S. Bolotin, P. H. Rabinowitz, 
\emph{On the multiplicity of periodic solutions of mountain pass type for a class of semilinear PDE's}, 
J. Fixed Point Theory Appl. 2 (2007), no. 2, 313-331.

\bibitem{Rabi2014}S. Bolotin, P. H. Rabinowitz, 
\emph{Hybrid mountain pass homoclinic solutions of a class of semilinear elliptic PDEs}, 
Ann. Inst. H. Poincar\'e Anal. Non Lin\'eaire 31 (2014), no. 1, 103-128.

\bibitem{dela} R. de la Llave, E. Valdinoci, 
\emph{Ground states and critical points for generalized Frenkel--Kontorova models in $\Z ^d$}, 
Nonlinearity 20 (2007), no. 10, 2409-2424.

\bibitem{dela1} R. de la Llave, E. Valdinoci, 
\emph{Critical points inside the gaps of ground state laminations for some models in statistical mechanics}, 
J. Stat. Phys. 129 (2007), no. 1, 81-119.

%\bibitem{lim} W.-L. Li,
%\emph{Mountain pass solutions to a generalized Frenkel--Kontorova model},
%Sci. China Math. 65 (2022), no. 6, 1293-1318.

\bibitem{LW} W.-L. Li, 
\emph{Variational construction of basic heteroclinic solutions for an Allen--Cahn equation},
preprint, 2023, 
\href{https://doi.org/10.48550/arXiv.2309.11466}{arXiv:2309.11466}.
%\href{https://doi.org/10.48550/arXiv.2309.11466}{https://doi.org/10.48550/arXiv.2309.11466}

%\bibitem{LC} W.-L. Li, X. Cui, 
%\emph{Heteroclinic solutions for a generalized Frenkel--Kontorova model by minimization methods of Rabinowitz and Stredulinsky}, 
%J. Differential Equations 268 (2020), no. 3, 1106-1155. 

\bibitem{LC1} W.-L. Li, X. Cui, 
\emph{Multitransition solutions for a generalized Frenkel--Kontorova model},
Discrete Contin. Dyn. Syst. 40 (2020), no. 11, 6135-6158. 

\bibitem{Mather} J. Mather, 
\emph{Existence of quasiperiodic orbits for twist homeomorphisms of the annulus}, 
Topology 21 (1982), no. 4, 457-467.

\bibitem{Miao} X.-Q. Miao, W.-X. Qin, Y.-N. Wang, 
\emph{Secondary invariants of Birkhoff minimizers and heteroclinic orbits},
J. Differ. Equ. 260 (2016), no. 2, 1522-1557.


\bibitem{moser} J. Moser, 
\emph{Minimal solutions of variational problems on a torus}, 
Ann. Inst. H. Poincar\'e Anal. Non Lin\'eaire 3 (1986), no. 3, 229-272.

\bibitem{Mramor} B. Mramor, B. Rink, 
\emph{Ghost circles in lattice Aubry--Mather theory}, 
J. Differ. Equ. 252 (2012), no. 4, 3163-3208. 

\bibitem{RS} P. H. Rabinowitz, E. W. Stredulinsky, 
\emph{Extensions of Moser--Bangert Theory: Locally Minimal Solutions},
Progress in Nonlinear Differential Equations and Their Applications 81, Birkh$\ddot{a}$user Basel, 2011.


\end{thebibliography}
\end{document}